\newtheorem{thm}{Theorem}[section]
      \newtheorem{lemma}[thm]{Lemma}
      \newtheorem{defn}[thm]{Definition}
      \newtheorem{examp}[thm]{Example}
      \numberwithin{equation}{section}
\title [de Branges matrices and related topics]{de Branges matrices and associated de Branges spaces of vector valued entire functions}
\author[ Mahapatra]{Subhankar Mahapatra}
\address{
	Department of Mathematics\\
	Indian Institute of Technology Ropar\\
	140001\\
	India}
\email{
\begin{tabular}[t]{l}
 subhankar.19maz0001@iitrpr.ac.in\\
subhankarmahapatra95@gmail.com \\
\end{tabular}
}
\author[Sarkar]{Santanu Sarkar}
\address{
	Department of Mathematics\\
	Indian Institute of Technology Ropar\\
	140001\\
	India}
\email{
\begin{tabular}[t]{l}
 santanu@iitrpr.ac.in\\
santanu87@gmail.com \\
\end{tabular}
}
\begin{document}
	
 \subjclass{ Primary: 46E22, 47A56; Secondary: 30D10, 47A68. }

\keywords{ de Branges matrices, de Branges spaces, Factorization of meromorphic matrix valued functions, Associated function}

\begin{abstract}
\noindent This paper extends the concept of de Branges matrices to any finite $m\times m$ order where $m=2n$. We shall discuss these matrices along with the theory of de Branges spaces of $\mathbb{C}^n$-valued entire functions and their associated functions. A parametrization of these matrices is obtained using the Smirnov maximum principle for matrix valued functions. Additionally, a factorization of matrix valued meromorphic functions is discussed.
\end{abstract}

\maketitle
\section{Introduction}
\label{Sec-1}
This paper extends the concept of de Branges matrices introduced by Golinskii and Mikhailova in \cite{Golinskii} to any finite $m\times m$ order where $m=2n$. Their primary motivation was to investigate the close connection between Hilbert spaces of entire functions and the $J$ theory of analytic matrix valued functions. They examined various aspects of de Branges spaces of scalar valued entire functions from this perspective. A detailed study of de Branges spaces of scalar valued entire functions is available in \cite{Branges 4}, while de Branges spaces of vector valued entire functions and $J$-contractive matrix valued functions are discussed in \cite{ArD08} and \cite{ArD18}. Our motivation is to explore de Branges spaces of vector valued entire functions independently from the viewpoint of de Branges matrices. We believe that the present work will be applicable to interpolation theory (see \cite{ArD01}) and functional model theory (see \cite{Gubreev}).\\
In section \ref{Sec-2}, we review definitions of various classes of matrix valued holomorphic functions, such as the Carathéordory class and the Smirnov class. We highlight special properties of these classes, including the integral representation of elements in the Carathéordory class and the Smirnov maximum principle for elements in the Smirnov class. Additionally, we discuss the Potapov-Ginzburg transform for $J$-contractive matrix valued functions. In section \ref{Sec-3}, we revisit de Branges spaces of vector valued entire functions and their associated functions. Section \ref{Sec-4} addresses the problem of identifying a common factor for multiple matrix valued meromorphic functions that encompasses all of their poles. Section \ref{Sec-5} presents the proposed extension of de Branges matrices. Also, an analog representation of de Branges matrices, originally known as the real representation of de Branges matrices, has been established. Some examples of these de Branges matrices are also included. Finally, in the last section, we parametrize these de Branges matrices.\\
Along with the standard notations, the following nonstandard notations will be used throughout this paper:\\
$f^{\#}(z)=f(\overline{z})^*$ and $f^{-*}(z)=\{f(z)^*\}^{-1}$,\\
$(H^2_{n\times n})^\perp=\{f:f^{\#}\in H^2_{n\times n}\}$ and $(H^2_n)^\perp=(H^2_{n\times 1})^\perp$,\\
$\rho_w(z)=-2\pi i(z-\overline{w})$.

\section{Preliminaries}
\label{Sec-2}
In this section, we revisit some results concerning matrix valued holomorphic functions that will be utilized in the later sections. Although these results can be found in \cite{ArD08}, we include them here for the convenience of our readers. First, we recall some well-known classes of matrix valued holomorphic functions.\\
\textbf{Hardy Hilbert spaces:} $H^2_{n\times n}$ denotes the class of $n\times n$ matrix valued functions $f(z)$ holomorphic in $\mathbb{C}_+$ and satisfying
$$||f||_2^2=\mbox{sup}_{y>0}\int_{-\infty}^\infty \mbox{trace}\{f(x+iy)^*f(x+iy)\}dx<\infty.$$
$H^2_n$ denotes the same class for $\mathbb{C}^n$-valued holomorphic functions in $\mathbb{C}_+$, i.e., $H^2_n=H^2_{n\times 1}$. It is known that these classes are Hilbert spaces.\\
\textbf{Carathéordory class:} $\mathcal{C}^{n\times n}$ denotes the class of $n\times n$ matrix valued functions $f(z)$ holomorphic in $\mathbb{C}_+$ such that the real part of $f$ is positive semi-definite for all $z\in\mathbb{C}_+$, i.e.,
$$\mbox{Re}~f(z)=\frac{f(z)+f(z)^*}{2}\succeq 0\hspace{.3cm}\mbox{for all}~z\in\mathbb{C}_+.$$
A function $f(z)\in \mathcal{C}^{n\times n}$ if and only if it can be represented as the following integral form
\begin{equation}
f(z)=iQ-izP+\frac{1}{\pi i}\int_{-\infty}^\infty \left\{\frac{1}{x-z}-\frac{x}{1+x^2}\right\}~d\sigma(x),
\end{equation}
where $Q=Q^*$, $P\succeq 0$ are $n\times n$ complex matrices and $\sigma(x)$ is a nondecreasing $n\times n$ matrix valued function on $\mathbb{R}$ such that $\int_{-\infty}^\infty \frac{d(\mbox{trace}~\sigma(x))}{1+x^2}<\infty$.\\
\textbf{Smirnov class:} $\mathcal{N}_+^{n\times n}$ denotes the class of $n\times n$ matrix valued functions $f(z)$ holomorphic in $\mathbb{C}_+$ such that it can be represented as
$$f(z)=h(z)^{-1}g(z),$$
where $g(z)$ is an $n\times n$ matrix valued bounded holomorphic function in $\mathbb{C}_+$ and $h(z)$ is a scalar valued bounded outer function in $\mathbb{C}_+$. This class is closed under addition and suitable matrix multiplication. The Smirnov maximum principle (see \cite{ArD08}, Theorem $3.59$), which will be utilized in the final section, is one of the important properties of this class.\\
Additionally, $\mathcal{S}_{in}^{n\times n}$ denotes the class of $n\times n$ matrix valued functions $f(z)$ holomorphic in $\mathbb{C}_+$ such that 
$$||f(z)||\leq 1\hspace{.3cm}\mbox{for all}~z\in\mathbb{C}_+,$$
and the corresponding boundary function is unitary almost everywhere on $\mathbb{R}$, i.e.,
$$f(x)f(x)^*=I_n\hspace{.3cm}\mbox{for a.e.}~x\in\mathbb{R}.$$
Suppose $J$ is a signature matrix of order $m$, i.e., $J=J^*=J^{-1}$. Consider the orthogonal projection matrices
$$P=\frac{I_m+J}{2}\hspace{.3cm}\mbox{and}\hspace{.3cm}Q=\frac{I_m-J}{2}.$$
We denote $P(J)$, the family of $m\times m$ matrix valued meromorphic functions $\mathcal{A}(z)$ in $\mathbb{C}_+$ such that $\mathcal{A}(z)^*J\mathcal{A}(z)\preceq J$, where $\mathcal{A}(z)$ is holomorphic. Now, if $\mathcal{A}\in P(J)$ and $\mathcal{A}(x)^*J\mathcal{A}(x)= J$ for almost every $x\in\mathbb{R}$, we call $\mathcal{A}$ to be $J$-inner. We denote the family of $m\times m$ $J$-inner matrix valued functions as $U(J)$. If $\mathcal{A}\in P(J)$ the corresponding Potapov-Ginzburg transform is given by
$$\mbox{PG}(\mathcal{A})(z)=[P\mathcal{A}(z)+Q]~[P+Q\mathcal{A}(z)]^{-1}=[P-\mathcal{A}(z)]^{-1}[\mathcal{A}(z)P-Q].$$
Note that $\mathcal{A}\in U(J)$ if and only if $\mbox{PG}(\mathcal{A})\in \mathcal{S}^{m\times m}_{in}$.

\section{de Branges spaces and associated functions}
\label{Sec-3}
This section briefly recalls de Branges spaces $\mathcal{B}(\mathfrak{E})$ of vector valued entire functions corresponding to an $n\times 2n$ matrix valued entire function 
\begin{equation}
\mathfrak{E}(z)=[E_-(z)~E_+(z)],\label{de Branges pair}
\end{equation}
where $E_-$ and $E_+$ are blocks of order $n\times n$. An $n\times 2n$ matrix valued entire function of the form $(\ref{de Branges pair})$ generates a de Branges space $\mathcal{B}(\mathfrak{E})$ consisting of $\mathbb{C}^n$-valued entire functions and having the reproducing kernel
\begin{equation}
K_w(z):= \left\{
    \begin{array}{ll}
         \frac{E_+(z)E_+(w)^*-E_-(z)E_-(w)^*}{\rho_w(z)}  & \mbox{if } z \neq \overline{w} \\
         \frac{E_+^{'} (\overline{w})E_+(w)^*- E_-^{'}(\overline{w})E_-(w)^*}{-2\pi i} & \mbox{if } z = \overline{w}
    \end{array} \right.\label{de Branges kernel}
\end{equation}
if the blocks $E_-(z)$ and $E_+(z)$ are satisfying the following conditions
\begin{enumerate}
\item $\det E_+(z)\not\equiv 0$ in $\mathbb{C}$, and
\item $E_+^{-1}E_-\in\mathcal{S}^{n\times n}_{in}$.
\end{enumerate}
Since $E_+^{-1}E_-\in\mathcal{S}^{n\times n}_{in}$, the following relations between $E_+$ and $E_-$ are immediate
\begin{equation}
E_+(z)E_+(z)^*-E_-(z)E_-(z)^*\succeq 0\hspace{.3cm}\mbox{for}~z\in\mathbb{C}_+,
\end{equation}
and 
\begin{equation}
E_+(x)E_+(x)^*-E_-(x)E_-(x)^*=0\hspace{.3cm}\mbox{for}~x\in\mathbb{R}.\label{Equality on real line}
\end{equation}
The following theorem (see \cite{ArD18}, Theorem $3.10$) describes the elements in the space $\mathcal{B}(\mathfrak{E})$ and the associated inner product.
\begin{thm}
Let $\mathcal{B}(\mathfrak{E})$ be a de Branges space corresponding to the $n\times 2n$ matrix valued entire function of the form $(\ref{de Branges pair})$, then
\begin{equation}
\mathcal{B}(\mathfrak{E})=\{f:\mathbb{C}\to\mathbb{C}^n~\mbox{entire}~:E_+^{-1}f\in H_n^2~\mbox{and}~E_-^{-1}f\in (H_n^2)^\perp\}
\end{equation}
and the inner product for any $f,g\in \mathcal{B}(\mathfrak{E})$ is given by
\begin{equation}
\langle f,g\rangle_{\mathcal{B}(\mathfrak{E})}=\int_{-\infty}^\infty g(x)^*\{E_+(x)E_+(x)^*\}^{-1}f(x)~dx.
\end{equation}
\end{thm}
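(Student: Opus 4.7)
Write $\mathcal{H}$ for the right-hand side of (6), equipped with the form (7). The plan is to show that $\mathcal{H}$ is a reproducing kernel Hilbert space of $\mathbb{C}^{n}$-valued entire functions whose reproducing kernel is the $K_{w}$ of (3); since $\mathcal{B}(\mathfrak{E})$ is by definition the reproducing kernel Hilbert space generated by $K_{w}$, the uniqueness of an RKHS given its kernel forces $\mathcal{H}=\mathcal{B}(\mathfrak{E})$ with matching norms.

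The first step uses the real-line identity (5): for $f\in\mathcal{H}$,
\[
\langle f,f\rangle_{\mathcal{H}}=\int_{-\infty}^{\infty}f(x)^{*}\{E_{+}(x)E_{+}(x)^{*}\}^{-1}f(x)\,dx=\|E_{+}^{-1}f\|_{H^{2}_{n}}^{2},
\]
so $U\colon f\mapsto E_{+}^{-1}f$ isometrically embeds $\mathcal{H}$ into $H^{2}_{n}$, and the two membership conditions cut out a closed subspace of $H^{2}_{n}$; completeness of $\mathcal{H}$ follows. Boundedness of point evaluations is then immediate from the Cauchy formula for $H^{2}_{n}$: for $w\in\mathbb{C}_{+}$, with $g=E_{+}^{-1}f$,
\[
\|f(w)\|\le\|E_{+}(w)\|\,|g(w)|\le\|E_{+}(w)\|\,(2\sqrt{\pi\operatorname{Im}w})^{-1}\,\|f\|_{\mathcal{H}},
\]
with the analogous estimate on $\mathbb{C}_{-}$ via $E_{-}^{-1}f\in(H^{2}_{n})^{\perp}$, and the case $w\in\mathbb{R}$ by entirety.

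The central step is to exhibit $K_{w}$ as the reproducing kernel. Fix $w\in\mathbb{C}_{+}$ and $\xi\in\mathbb{C}^{n}$, and let $F_{w}^{\xi}(z):=K_{w}(z)\xi$. Writing
\[
(E_{+}^{-1}F_{w}^{\xi})(z)=\frac{E_{+}(w)^{*}\xi-(E_{+}^{-1}E_{-})(z)E_{-}(w)^{*}\xi}{\rho_{w}(z)},
\]
the factor $E_{+}^{-1}E_{-}\in\mathcal{S}^{n\times n}_{in}$ is contractive in $\mathbb{C}_{+}$ and $1/\rho_{w}\in H^{2}$, so $E_{+}^{-1}F_{w}^{\xi}\in H^{2}_{n}$. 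A parallel rewriting using $E_{-}^{-1}E_{+}=(E_{+}^{-1}E_{-})^{-1}$, which is bounded in $\mathbb{C}_{-}$ by the $\#$-image of the Smirnov inner property, places $E_{-}^{-1}F_{w}^{\xi}$ in $(H^{2}_{n})^{\perp}$. The reproducing identity $\langle f,F_{w}^{\xi}\rangle_{\mathcal{H}}=\xi^{*}f(w)$ then reduces to the Cauchy integral representation of $g(w)=(E_{+}^{-1}f)(w)$ on the real line, once (5) is used to rewrite the weight $\{E_{+}E_{+}^{*}\}^{-1}$ symmetrically. The case $w\in\mathbb{C}_{-}$ follows by applying the same argument to $\mathfrak{E}^{\#}$, and the limiting value $w=\overline{w}\in\mathbb{R}$ produces the second branch of (3) by a routine removable-singularity argument.

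The principal obstacle I anticipate is the $(H^{2}_{n})^{\perp}$-membership of $E_{-}^{-1}F_{w}^{\xi}$: the rewriting must simultaneously cancel the apparent pole at $z=w$ and relocate the remaining singularity of $1/\rho_{w}$ into $\mathbb{C}_{-}$, and this is precisely where the almost-everywhere unitarity of $E_{+}^{-1}E_{-}$ on $\mathbb{R}$ together with (5) must be invoked carefully. Once this verification is complete, the identification of $\mathcal{H}$ with $\mathcal{B}(\mathfrak{E})$ and of the two inner products is automatic from the RKHS uniqueness.
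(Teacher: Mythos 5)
The paper does not actually prove this theorem: it is quoted from Arov--Dym (\cite{ArD18}, Theorem $3.10$) with no argument supplied. Your strategy --- realize the right-hand side of the set description as a Hilbert space $\mathcal{H}$ of entire functions with bounded point evaluations, check that the columns of $K_w$ from $(\ref{de Branges kernel})$ lie in $\mathcal{H}$ and reproduce, and conclude $\mathcal{H}=\mathcal{B}(\mathfrak{E})$ by uniqueness of the reproducing kernel Hilbert space attached to a kernel --- is precisely the standard route to this result, so there is no divergence of approach to discuss. However, two steps in your sketch are asserted rather than proved, and they are exactly where the work lies.

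First, completeness. Saying that the two membership conditions ``cut out a closed subspace of $H^2_n$'' is the conclusion, not an argument. The honest version: by $(\ref{Equality on real line})$ the single norm controls both $\|E_+^{-1}f\|_{H^2_n}$ and $\|E_-^{-1}f\|_{(H^2_n)^\perp}$, so a Cauchy sequence $f_k$ yields limits $g\in H^2_n$ and $h\in(H^2_n)^\perp$ and locally uniform convergence of $f_k$ to $E_+g$ in $\mathbb{C}_+$ and to $E_-h$ in $\mathbb{C}_-$; one must then show $E_+g$ and $E_-h$ are analytic continuations of one another across $\mathbb{R}$ (matching boundary values plus a Morera-type argument) to obtain a single entire limit. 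Without this, $\mathcal{H}$ could a priori fail to be complete.

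Second, the membership $E_-^{-1}K_w\xi\in(H^2_n)^\perp$, which you rightly flag as the principal obstacle but leave unresolved. Note the zero of $\rho_w(z)=-2\pi i(z-\overline{w})$ sits at $z=\overline{w}$, not at $z=w$ as you write. Put $s=E_+^{-1}E_-\in\mathcal{S}^{n\times n}_{in}$. Almost-everywhere unitarity of $s$ on $\mathbb{R}$ forces $ss^{\#}\equiv I_n$ as meromorphic functions, so $E_-^{-1}E_+=s^{-1}=s^{\#}$ is holomorphic and contractive in $\mathbb{C}_-$, and since $s^{\#}(\overline{w})E_+(w)^*=s(w)^*E_+(w)^*=E_-(w)^*$ one gets
\begin{equation*}
E_-(z)^{-1}K_w(z)\xi=\frac{\bigl(s^{\#}(z)-s^{\#}(\overline{w})\bigr)E_+(w)^*\xi}{\rho_w(z)},
\end{equation*}
a difference quotient of a bounded holomorphic function in $\mathbb{C}_-$ with removable singularity at $\overline{w}$, hence in the Hardy space of the lower half-plane. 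This is the computation your sketch needs; with it and the completeness argument supplied, the proposal is correct and coincides with the standard proof.
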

A characterization of the de Branges space $\mathcal{B}(\mathfrak{E})$ is presented in \cite{JFA} (Theorem $7.1$). Recently, we have extended this characterization to de Branges spaces based on operator valued reproducing kernels in \cite{Mahapatra}.\\
Recall that an $n\times n$ matrix valued entire function $S(z)$ is said to be associated with the de Branges space $\mathcal{B}(\mathfrak{E})$ if $\det S(w)\neq 0$ for some $w\in\mathbb{C}$ and for any $f\in \mathcal{B}(\mathfrak{E})$,
$$\frac{f(z)-S(z)S(w)^{-1}f(w)}{z-w}\in \mathcal{B}(\mathfrak{E}).$$
For a discussion on the associated functions of de Branges spaces of scalar valued entire functions, refer to \cite{Branges 4}; for vector valued entire functions, see \cite{Branges 5}.
The following theorem characterizes associated functions of $\mathcal{B}(\mathfrak{E})$ and describes a bounded operator on  $\mathcal{B}(\mathfrak{E})$.
\begin{thm}
\label{Associated function}
Let $\mathcal{B}(\mathfrak{E})$ be a de Branges space corresponding to the $n\times 2n$ matrix valued entire function of the form $(\ref{de Branges pair})$, and $S(z)$ is an $n\times n$ matrix valued entire function such that 
\begin{equation}
\frac{E_+^{-1}S}{\rho_i}\in H^2_{n\times n}\hspace{.3cm}\mbox{and}\hspace{.3cm}\frac{E_-^{-1}S}{\rho_{-i}}\in(H^2_{n\times n})^\perp.\label{Associated function condition}
\end{equation}
Then $E_+^{-1}S$ and $E_-^{-1}S$  are holomorphic in $\overline{\mathbb{C}_+}$ and $\overline{\mathbb{C}_-}$ respectively. Moreover, if $\det S(w)\neq 0$ for some $w\in\mathbb{C}$, the linear transformation $R_S(w):\mathcal{B}(\mathfrak{E})\to\mathcal{B}(\mathfrak{E})$ defined by
\begin{equation}
(R_S(w)f)(z)=\frac{f(z)-S(z)S(w)^{-1}f(w)}{z-w}\hspace{.3cm}\mbox{for}~f\in \mathcal{B}(\mathfrak{E})
\end{equation}
is everywhere defined bounded linear operator on $\mathcal{B}(\mathfrak{E})$.

Conversely, suppose $\det K_\beta(\beta)\neq 0$ for some $\beta\in\mathbb{C}$ and $\mathcal{B}(\mathfrak{E})$ is invariant under $R_S(w)$. Then $(\ref{Associated function condition})$ holds.
\end{thm}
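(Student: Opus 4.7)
The plan is to prove three assertions in sequence: the regularity of $E_\pm^{-1}S$ on the closed half-planes, the well-definedness and boundedness of $R_S(w)$, and finally the converse direction. For regularity, I would exploit that $E_+^{-1}S$ is meromorphic on $\mathbb{C}$ as a quotient of entire functions, while $\rho_i(z) = -2\pi i(z+i)$ has its only zero at $-i \in \mathbb{C}_-$. Hence the hypothesis $(E_+^{-1}S)/\rho_i \in H^2_{n\times n}$ forces $E_+^{-1}S$ to be holomorphic throughout $\mathbb{C}_+$. A pole of $E_+^{-1}S$ at any $x_0 \in \mathbb{R}$ would produce a non-$L^2$ singularity in the boundary function of $(E_+^{-1}S)/\rho_i$ near $x_0$, contradicting the $H^2$ assumption; this rules out real poles and yields holomorphy on all of $\overline{\mathbb{C}_+}$. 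The symmetric argument, noting that $\rho_{-i}$ vanishes only at $i \in \mathbb{C}_+$, handles $E_-^{-1}S$ on $\overline{\mathbb{C}_-}$.

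For the operator $R_S(w)$, I would use the characterization recalled just above: $g \in \mathcal{B}(\mathfrak{E})$ iff $E_+^{-1}g \in H^2_n$ and $E_-^{-1}g \in (H^2_n)^\perp$. Setting $u = E_+^{-1}f$, $A = E_+^{-1}S$, and $c = S(w)^{-1}f(w)$, the identity $u(w) = A(w)c$ makes
\begin{equation*}
E_+^{-1}(z)(R_S(w)f)(z) = \frac{u(z)-A(z)c}{z-w}
\end{equation*}
holomorphic in $\mathbb{C}_+$ once the removable singularity at $z=w$ is cleared. An integral estimate using $u \in H^2_n$, $A/\rho_i \in H^2_{n\times n}$, and the boundedness of $(x^2+1)/|x-w|^2$ on $\mathbb{R}$ for $w \notin \mathbb{R}$ gives an $L^2(\mathbb{R})$ boundary bound. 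Combined with the Smirnov-class membership inherited from $u$ and $A$, the Smirnov maximum principle delivers $E_+^{-1}(R_S(w)f) \in H^2_n$. The parallel $(H^2)^\perp$ computation handles $E_-^{-1}(R_S(w)f)$, and boundedness of $R_S(w)$ then follows from the closed graph theorem, since pointwise evaluation in $\mathcal{B}(\mathfrak{E})$ is continuous.

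For the converse, I would first pick $w_0$ close to $\beta$ at which both $\det S(w_0) \neq 0$ and $\det K_{w_0}(w_0) \neq 0$; such a point exists because the first condition fails only on a discrete set and the second holds on an open set containing $\beta$ by continuity of $K_w(w)$. Invariance at $w_0$ gives $R_S(w_0)f \in \mathcal{B}(\mathfrak{E})$ for every $f \in \mathcal{B}(\mathfrak{E})$, and rearranging the defining relation yields
\begin{equation*}
\frac{E_+^{-1}(z) S(z)}{\rho_i(z)}\, S(w_0)^{-1} f(w_0) = \frac{E_+^{-1}(z) f(z)}{\rho_i(z)} - \frac{z-w_0}{\rho_i(z)}\, E_+^{-1}(z)(R_S(w_0)f)(z).
\end{equation*}
The right-hand side lies in $H^2_n$ because $1/\rho_i$ and $(z-w_0)/\rho_i$ both belong to $H^\infty(\mathbb{C}_+)$. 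The reproducing-kernel hypothesis makes $f \mapsto f(w_0)$ surject onto $\mathbb{C}^n$ (the range equals that of $K_{w_0}(w_0)$), so $S(w_0)^{-1} f(w_0)$ sweeps $\mathbb{C}^n$ as $f$ varies; consequently every column of $(E_+^{-1}S)/\rho_i$ lies in $H^2_n$. The mirror computation on $\overline{\mathbb{C}_-}$ with $\rho_{-i}$ gives the second half of \eqref{Associated function condition}.

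The main obstacle I expect is in the boundedness step: the Smirnov-class bookkeeping for $(u - Ac)/(z-w)$, where $A$ itself is only $\rho_i$ times an $H^2_{n\times n}$ matrix, requires tracking the linear growth of $\rho_i$ against the cancellation at $z = w$. Once both the $L^2$ boundary estimate and Smirnov-class membership are in hand, the Smirnov maximum principle bridges them cleanly; the other verifications are algebraic and parallel standard facts in the de Branges-space literature.
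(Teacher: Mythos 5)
The paper does not actually prove this theorem: its ``proof'' is a one-line citation to Lemma 3.18 of Arov--Dym \cite{ArD18}. Your reconstruction follows the standard line of argument behind that lemma and is essentially sound: holomorphy of $E_\pm^{-1}S$ from $H^2$ membership plus meromorphy (real poles being excluded by local non-square-integrability of boundary values), the identity $E_+^{-1}(R_S(w)f)=(u-Ac)/(z-w)$ with $u(w)=A(w)c$, the $L^2(\mathbb{R})$ boundary estimate via boundedness of $\rho_i(x)/(x-w)$, the Smirnov maximum principle, and the closed graph theorem (legitimate here because point evaluations are continuous). Two points deserve tightening. First, in the forward direction the removable-singularity and Smirnov-class bookkeeping genuinely splits into cases according to whether $w$ lies in $\mathbb{C}_+$, on $\mathbb{R}$, or in $\mathbb{C}_-$ (for $w\notin\overline{\mathbb{C}_+}$ the factor $1/(z-w)$ is bounded on $\mathbb{C}_+$ and there is nothing to remove; for $w\in\overline{\mathbb{C}_+}$ you must divide out the zero of $u-Ac$ inside the Smirnov class); your sketch acknowledges but does not execute this. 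Second, and more substantively, in the converse your step ``pick $w_0$ close to $\beta$ at which both $\det S(w_0)\neq 0$ and $\det K_{w_0}(w_0)\neq 0$'' presumes that the invariance hypothesis is available at that new point $w_0$, whereas the statement only grants invariance under $R_S(w)$ at some (possibly different) point $w$ and nondegeneracy of the kernel at $\beta$. To close this you need either to read the hypothesis as invariance at all admissible $w$, or to invoke the standard fact that for de Branges spaces $\det K_\omega(\omega)\neq 0$ at one point forces $\det K_\omega(\omega)\neq 0$ off an exceptional set, so that the argument can be run at the given invariance point $w$ itself. As it stands the surjectivity of $f\mapsto f(w)$ is asserted at a point where it has not been established.
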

\begin{proof}
The proof of this theorem follows from Lemma $3.18$ in \cite{ArD18}.
\end{proof}
\section{Factorization of matrix valued meromorphic functions}
\label{Sec-4}
 In this section, we discuss a factorization of matrix valued meromorphic functions in $\mathbb{C}$. Specifically, we decompose multiple matrix valued meromorphic functions to identify one common factor that encompasses all the poles of the original functions.  Suppose $F(z)$ is an $n\times n$ matrix valued meromorphic function in $\mathbb{C}$ and $\det F(z)\not\equiv 0$. A point $z_0\in\mathbb{C}$ is a pole of $F(z)$ if it is a pole of one of its entries, and $z_0$ is a zero of $F(z)$ if it is a pole of $F(z)^{-1}$. For any $z_0\in\mathbb{C}$, $F(z)$ can be decomposed into the following form (see, \cite{Gant}, sections $VI.2$, $VI.3$)
 $$F(z)=M(z)\mbox{diag}((z-z_0)^{r_1}\ldots(z-z_0)^{r_n})N(z),$$
where $M(z)$ and $N(z)$ are analytic and invertible at $z_0$ and $\{r_1, r_2,\dots, r_n\}$ is an ascending sequence of integers. When $r_j<0$, the numbers $|r_j|$ are called the partial pole multiplicities of $F(z)$ at $z_0$, and when $r_j>0$, the numbers $r_j$ are called the partial zero multiplicities of $F(z)$ at $z_0$. Now, we recall the ideas of eigenvector and pole vector of $F(z)$ at any point $z_0\in\mathbb{C}$. A more detailed discussion of them and the other related results can be found in \cite{Bart1} (Chapter $2$, section $1$). Also, a factorization of meromorphic matrix-valued functions of finite order can be found in \cite{Ran}.\\
 A nonzero vector $u_1\in\mathbb{C}^n$ is called an eigenvector of $F(z)$ at the zero $z_0$ if there exist vectors $\{u_2, u_3,\ldots\}\subset\mathbb{C}^n$ such that $F(z)\sum_{j=0}^\infty u_{j+1}(z-z_0)^j$ is analytic at $z_0$ and has a zero at $z_0$. If this zero has order at least $m$, then $u_1, u_2,\ldots, u_m$ is called a zero chain of length $m$ of $F(z)$ at $z_0$. It can be proved that the number of independent eigenvectors at $z_0$ equals the number of partial zero multiplicities. Furthermore, for a given eigenvector $u_1$, the maximal length of a zero chain starting at $u_1$ corresponds to one of the partial zero multiplicities.\\
 A nonzero vector $v_1\in\mathbb{C}^n$ is called a pole vector of $F(z)$ at the pole $z_0$ if there exist vectors $\{v_2, v_3,\ldots\}\subset\mathbb{C}^n$ such that $F(z)^{-1}\sum_{j=0}^\infty v_{j+1}(z-z_0)^j$ is analytic at $z_0$ and has a zero at $z_0$. If this zero has order at least $l$, then $v_1, v_2, \ldots, v_l$ is called a pole chain of length $l$ of $F(z)$ at $z_0$. It can be proved that the number of independent pole vectors at $z_0$ equals the number of partial pole multiplicities. Furthermore, for a given pole vector $v_1$, the maximal length of a pole chain starting at $v _1$ corresponds to one of the partial pole multiplicities.\\
 The following theorem is a matrix analog of Theorem $19$ in the appendix of \cite{Rovnyak}, and the proof can be done similarly. 
 \begin{thm}
 Let $\{P_k\}_{k=1}^\infty$ be a sequence of orthogonal projection matrices of order $n\times n$ and $\{z_k\}_{k=1}^\infty$ be a sequence of nonzero complex numbers such that $|z_k|\to\infty$ as $k\to\infty$. Then 
 \begin{multline}
 \label{Product}
 P(z)=\lim_{k\to\infty}\exp(\frac{z}{z_k}P_k+\ldots+\frac{1}{k}\frac{z^k}{z_k^k}P_k)(I_n-\frac{z}{z_k}P_k)\ldots\\
 \exp(\frac{z}{z_1}P_1)(I_n-\frac{z}{z_1}P_1)
 \end{multline}
 converges uniformly in any bounded set with respect to the operator norm and $P(z)$ is an $n\times n$ matrix valued entire function. Also, $\det P(z)\neq 0$ for all $z\in\mathbb{C}\setminus\{z_k\}_{k=1}^\infty$.
 \end{thm}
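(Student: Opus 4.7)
The key observation is that each matrix factor in the product is a rank-one perturbation of the identity along $P_j$, so the entire matrix product collapses to a product of scalars times a projection. Using $P_j^2 = P_j$, one has $(aP_j)^k = a^k P_j$ for every $k\ge 1$, and hence
$$\exp\!\Bigl(\sum_{i=1}^{j}\tfrac{1}{i}\tfrac{z^i}{z_j^i}P_j\Bigr) \;=\; I_n + \bigl(e^{\phi_j(z/z_j)}-1\bigr)P_j,\qquad \phi_j(w):=w+\tfrac{w^2}{2}+\cdots+\tfrac{w^j}{j}.$$
Multiplying by $(I_n-(z/z_j)P_j)$ and again using $P_j^2=P_j$, the $j$-th factor reduces to
$$F_j(z) \;:=\; I_n + \bigl(\mathcal{E}_j(z/z_j)-1\bigr)P_j,$$
where $\mathcal{E}_j(w)=(1-w)\exp\phi_j(w)$ is the classical Weierstrass primary factor. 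This algebraic reduction is the heart of the matter; from here the problem becomes essentially scalar.

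Next, I would invoke the standard scalar estimate $|\mathcal{E}_j(w)-1|\le |w|^{j+1}$ for $|w|\le 1$. Fix $R>0$ and pick $k_0$ with $|z_k|\ge 2R$ for all $k\ge k_0$; then for $|z|\le R$ and $k\ge k_0$,
$$\|F_k(z)-I_n\| \;\le\; |\mathcal{E}_k(z/z_k)-1| \;\le\; 2^{-(k+1)}.$$
Since $\sum_{k\ge k_0} 2^{-(k+1)}<\infty$, the standard criterion for infinite products in a Banach algebra yields uniform convergence on $\{|z|\le R\}$ of the partial products $\Pi_N(z)=F_N(z)F_{N-1}(z)\cdots F_1(z)$. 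Because $R$ is arbitrary and each $\Pi_N$ is an entire matrix valued function, the limit $P(z)$ is entire.

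Finally, for the nonvanishing claim, in a basis diagonalizing $P_j$ the factor $F_j(z)$ is diagonal with entries $\mathcal{E}_j(z/z_j)$ repeated $r_j:=\mathrm{rank}\,P_j$ times and $1$ repeated $n-r_j$ times, so $\det F_j(z)=\mathcal{E}_j(z/z_j)^{r_j}$. By continuity of the determinant under uniform convergence on compacta,
$$\det P(z) \;=\; \prod_{k=1}^{\infty}\mathcal{E}_k(z/z_k)^{r_k},$$
and the right-hand side is a classical scalar Weierstrass product whose zero set is exactly $\{z_k:r_k>0\}$. Hence $\det P(z)\ne 0$ for every $z\in\mathbb{C}\setminus\{z_k\}_{k=1}^{\infty}$.

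The only genuinely nontrivial step is the algebraic collapse $F_j(z)=I_n+(\mathcal{E}_j(z/z_j)-1)P_j$; once that identity is secured, the convergence and determinant arguments are routine consequences of the scalar Weierstrass theorem. I do not anticipate any serious obstacle beyond carefully tracking $P_j^2=P_j$ through the matrix exponential.
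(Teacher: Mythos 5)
Your proof is correct and complete. The algebraic collapse is right: since $P_j^2=P_j$, one has $\exp(cP_j)=I_n+(e^c-1)P_j$, and multiplying by $I_n-(z/z_j)P_j$ gives exactly $F_j(z)=I_n+\bigl(E_j(z/z_j)-1\bigr)P_j$ with $E_j$ the Weierstrass primary factor; from there the classical bound $|E_j(w)-1|\le|w|^{j+1}$ for $|w|\le 1$, together with $\|P_j\|\le 1$, gives a summable bound on $\sup_{|z|\le R}\|F_k(z)-I_n\|$, and the standard Cauchy-product argument for ordered products in a Banach algebra (note the factors do not commute, but the criterion $\sum_k\sup_K\|F_k-I_n\|<\infty$ still applies) yields uniform convergence on bounded sets; the determinant identity $\det F_k(z)=E_k(z/z_k)^{\operatorname{rank}P_k}$ then reduces the nonvanishing claim to the scalar Weierstrass theorem. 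This is worth contrasting with the paper, which offers no written proof at all: it only asserts that the statement is a matrix analog of Theorem 19 in the appendix of de Branges--Rovnyak and that ``the proof can be done similarly.'' Your route is therefore genuinely self-contained where the paper is not, and the idempotency trick is the right way to discharge the noncommutativity that a naive transcription of the scalar proof would stumble over; as a bonus you identify the zero set of $\det P$ precisely as $\{z_k:\operatorname{rank}P_k>0\}$, which is slightly sharper than the stated conclusion. The only step I would ask you to write out, were this to appear in print, is the telescoping estimate $\|\Pi_{N+1}-\Pi_N\|\le\exp\bigl(\sum_k\|F_k-I_n\|\bigr)\,\|F_{N+1}-I_n\|$ behind the ``standard criterion,'' since that is where the left-multiplication order of the product actually enters.
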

The following theorem gives a factorization of a meromorphic matrix valued function. 
\begin{thm}
Let $F(z)$ be an $n\times n$ matrix valued meromorphic function such that $\det F(0)\neq 0$. Then $$G(z)=P(z)~F(z),$$ where $P(z)$ is an $n\times n$ matrix valued entire function of the form $(\ref{Product})$, and $G(z)$ is an $n\times n$ matrix valued entire function.
\end{thm}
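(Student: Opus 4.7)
The plan is to choose the sequences $\{z_k\}$ and $\{P_k\}$ so that each elementary factor $I_n-(z/z_k)P_k$ successively removes one layer of the singular part of $F$ at the pole $z_k$, while the exponential prefactors serve only to ensure convergence of the infinite product, as guaranteed by the preceding theorem. Since $F$ is meromorphic on $\mathbb{C}$ with $\det F(0)\neq 0$, its poles form a discrete set missing the origin; I would order them as $\{z_k\}_{k=1}^\infty$ with $|z_k|\to\infty$, repeating each pole $z_0$ exactly $\mu(z_0):=\sum_j l_j$ times, where $l_1,\ldots,l_p$ are the partial pole multiplicities of $F$ at $z_0$ recalled earlier in this section.

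Next I would build the projections inductively. Set $F_0:=F$, and suppose $F_{k-1}$ is an $n\times n$ meromorphic matrix function whose partial pole multiplicities at $z_k$ still sum to a positive integer. Choose a left pole vector $v\in\mathbb{C}^n$ of $F_{k-1}$ at $z_k$, let $P_k:=vv^*/\|v\|^2$, and define
$$F_k(z):=\Bigl(I_n-\tfrac{z}{z_k}P_k\Bigr)F_{k-1}(z).$$
A local computation near $z_k$, based on the Smith--McMillan factorization $F_{k-1}(z)=M(z)\operatorname{diag}((z-z_k)^{r_j})N(z)$, shows that because the elementary factor specialized at $z=z_k$ equals $I_n-P_k$ and annihilates the chosen pole direction, the sum of partial pole multiplicities of $F_k$ at $z_k$ is exactly one less than that of $F_{k-1}$; at every other point $F_k$ retains the pole structure of $F_{k-1}$, since $I_n-(z/z_k)P_k$ is entire and invertible away from $z_k$. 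Iterating $\mu(z_0)$ times for each pole $z_0$ removes the singularity at $z_0$ completely.

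With these data, the preceding theorem produces an entire matrix valued function $P(z)$ of the form (\ref{Product}). Because every exponential factor appearing in the product is itself entire and invertible on $\mathbb{C}$, it introduces no new singularities and does not disturb the cancellations achieved inductively; hence $G(z):=P(z)F(z)$ extends to an entire matrix valued function. The main obstacle is the inductive step: one must verify rigorously that left multiplication by the rank-one factor $I_n-(z/z_k)P_k$, with $P_k$ projecting onto a judiciously chosen pole direction, lowers the total partial pole multiplicity at $z_k$ by exactly one without spreading pole mass elsewhere. This rests on the Jordan-chain description of pole vectors and the local Smith--McMillan form (cf.~\cite{Bart1,Gant}); once that local reduction is in place, the remainder of the argument is essentially bookkeeping together with a direct appeal to the previous theorem.
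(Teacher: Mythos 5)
Your overall architecture matches the paper's: order the poles by increasing modulus, peel them off with elementary factors $I_n-\frac{z}{z_k}P_k$, and invoke the preceding theorem for convergence of the infinite product. The genuine divergence is in the choice of $P_k$, and that is exactly where your argument has a hole. The paper takes $P_k$ to be the orthogonal projection onto the span of \emph{all} pole vectors of the current function at $z_k$, and then proves --- this is the entire substance of its proof --- a two-way correspondence between pole chains of length $m+1$ of $F$ at $z_1$ and pole chains of length $m$ of $(I_n-\frac{z}{z_1}P_1)F$ at $z_1$; hence every partial pole multiplicity drops by exactly one per step and each pole is revisited $\max_j|r_j|$ times. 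You instead take $P_k$ of rank one, built from a single pole vector, and claim the \emph{total} multiplicity drops by exactly one, revisiting each pole $\sum_j|r_j|$ times. That claim is true, but you do not prove it: you name it as ``the main obstacle'' and defer it to ``a local computation'' with the Smith--McMillan form. That deferred step \emph{is} the theorem. Note which half is easy: writing $F_{k-1}=(I_n-\frac{z}{z_k}P_k)^{-1}F_k$, the left factor has a simple pole with rank-one residue at $z_k$, so the total multiplicity drops by \emph{at most} one. The hard half is that it drops by \emph{at least} one. Left multiplication by $I_n-\frac{z}{z_k}P_k$ raises the order of the determinant at $z_k$ by one, but a priori that unit could be absorbed by creating a new partial \emph{zero} multiplicity rather than cancelling a pole --- which is precisely what happens when $v$ is not a pole vector, e.g.\ $F=\mathrm{diag}((z-z_0)^{-1},1)$ with $v=e_2$. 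Ruling this out for a pole vector $v$ needs a chain argument of the same kind as the paper's, and the rank-one adaptation is not cosmetic: the factor treats pole directions asymmetrically, so for $F=\mathrm{diag}((z-z_0)^{-2},(z-z_0)^{-1})$ one step yields partial multiplicities $(1,1)$ if $v=e_1$ but $(2,0)$ if $v=e_1+e_2$, and you cannot simply argue that each individual chain shortens by one.

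A secondary point: asserting at the end that $G=PF$ ``extends to an entire function'' because the exponential factors are invertible is not quite enough. Near a fixed point $w$ you must split $P$ into the finitely many factors with $z_k$ in a neighbourhood of $w$ (whose product with $F$ is analytic there by the induction) and an infinite tail that is analytic and invertible near $w$; the paper handles this by showing $G=\lim_{k\to\infty}G_k$ uniformly on bounded sets. This is routine but deserves a sentence. In summary: if you supply the rank-one reduction lemma your construction is a legitimate variant of the paper's (at the cost of more elementary factors per pole), but as written the decisive local step is asserted rather than proved.
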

\begin{proof}
If $F(z)$ has no pole, the theorem follows with $P(z)=I_n$ and $G(z)=F(z)$ for all $z\in\mathbb{C}$. Otherwise, let $z_1\neq 0$ be a pole of $F(z)$ nearest to the origin. We denote $N_1$ as the linear span of the pole vectors of $F(z)$ at $z=z_1$, and $P_1$ is the orthogonal projection on $N_1$. Suppose $\dim P_1=r\leq n$ and 
$P_1=
\begin{bmatrix}
    I_r  & 0 \\
    0  & 0 \\
\end{bmatrix}
$
with respect to some orthonormal basis. Now, define
\begin{equation}
\tilde{G}_1(z)=(I_n-\frac{z}{z_1}P_1)~F(z).\label{step 1}
\end{equation}
We denote the partial pole multiplicities of $F(z)$ at $z=z_1$ by $|k_1|,\ldots,|K_l|~(l\leq n)$. We now claim that the partial pole multiplicities of $\tilde{G}_1(z)$ at $z=z_1$ are given by the nonzero numbers among $|k_1+1|, \ldots, |k_l+1|$. Now, suppose that $\tilde{v}_1,\ldots, \tilde{v}_m$ is a pole chain of $\tilde{G}_1(z)$ at $z_1$, that is $\tilde{v}_1\neq 0$ and there exist vectors $\tilde{v}_{m+1},\ldots$ such that
\begin{equation}
\tilde{G}_1(z)^{-1}\sum_{j=1}^\infty\tilde{v}_j(z-z_1)^{j-1}=\sum_{j=m}^\infty\psi_j(z-z_1)^j.\label{step 2}
\end{equation}
Now, from equation $(\ref{step 1})$, we have
$$\tilde{G}_1(z)^{-1}=F(z)^{-1}(I_n-\frac{z}{z_1}P_1)^{-1}=F(z)^{-1}
\begin{bmatrix}
    z_1(z_1-z)^{-1}I_r  & 0 \\
    0  & I_{n-r} \\
\end{bmatrix}.
$$
Now, we write the vectors $\tilde{v}_j=\tilde{v}_{j1}+\tilde{v}_{j2}$ as the orthogonal sum, where $\tilde{v}_{j1}\in N_1$ and $\tilde{v}_{j2}\in N_1^\perp$. Then from $(\ref{step 2})$, we can write
\begin{align*}
\sum_{j=m}^\infty\psi_j(z-z_1)^j&=F(z)^{-1}\{\sum_{j=1}^\infty -z_1\tilde{v}_{j1}(z-z_1)^{j-2}+\sum_{j=1}^\infty\tilde{v}_{j2}(z-z_1)^{j-1}\}\\
&=F(z)^{-1}(z-z_1)^{-1}\{-z_1\tilde{v}_{11}+\sum_{j=1}^\infty(-z_1\tilde{v}_{(j+1) 1}+\tilde{v}_{j2})(z-z_1)^j\}.
\end{align*}
Here $\tilde{v}_{11}\neq 0$ since otherwise $-z_1\tilde{v}_{21}+\tilde{v}_{12}$ would be a pole vector of $F(z)$ at $z_1$, which can be true only if $\tilde{v}_{12}=0$. But if both $\tilde{v}_{11}=0$ and $\tilde{v}_{12}=0$ then $\tilde{v}_1=0$, contradicting the assumption $\tilde{v}_1\neq 0$. Thus it follows that the vectors 
$$v_1=z_1\tilde{v}_{11}, v_2=-z_1\tilde{v}_{21}+\tilde{v}_{12}, \ldots, v_{m+1}=-z_1\tilde{v}_{(m+1) 1}+\tilde{v}_{m2}$$
form a pole chain of length $m+1$ of $F(z)$ at $z_1$.\\
Conversely, let $v_1, v_2, \ldots, v_{m+1}~(m\geq 1)$ is a pole chain of $F(z)$ at $z_1$. Then $v_1\neq 0$ and 
$$F(z)^{-1}\sum_{j=1}^\infty v_j(z-z_1)^{j-1}=\sum_{j=m+1}^\infty\phi_j(z-z_1)^j.$$
Since $F(z)^{-1}=\tilde{G}_1(z)(I_n-\frac{z}{z_1}P_1)$, we have
\begin{align*}
\sum_{j=m+1}^\infty\phi_j(z-z_1)^j&=\tilde{G}_1(z)^{-1}\{\sum_{j=1}^\infty-\frac{1}{z_1}v_{j1}(z-z_1)^j+\sum_{j=1}^\infty v_{j2}(z-z_1)^{j-1}\}\\
&=\tilde{G}_1(z)^{-1}(z-z_1)\{v_{12}+\sum_{j=1}^\infty(-\frac{1}{z_1}v_{j1}+v_{(j+1) 2})(z-z_1)^{j-1}\}.
\end{align*}
Since $v_1$ belongs to $N_1$ , $v_{12}=0$. Thus we have a pole chain 
$$\tilde{v}_1=-\frac{1}{z_1}v_{11}+v_{22},\ldots, \tilde{v}_m=-\frac{1}{z_1}v_{m1}+v_{(m+1) 2}$$
of length $m$ of $\tilde{G}_1(z)$ at $z_1$. Due to the correspondence between the partial pole multiplicities and the lengths of the pole chains, the claim follows. Next we define 
$$G_1(z)=\exp(\frac{z}{z_1}P_1)(I_n-\frac{z}{z_1}P_1)F(z).$$
Clearly, $G_1(z)$ has the same partial pole multiplicities as $\tilde{G}_1(z)$. Now, if $G_1(z)$ is entire, consider $P(z)=\exp(\frac{z}{z_1}P_1)(I_n-\frac{z}{z_1}P_1)$ and $G(z)=G_1(z)$. Otherwise, let $z_2$ be a number nearest to the origin such that $G_1(z)$ has a pole at $z_2$ and continue inductively. If the number of poles of $F(z)$ is finite, this process will stop after finite steps, and we will get the desired factorization. Now, suppose the number of poles of $F(z)$ is infinite, i.e., $\{z_k\}_{k=1}^\infty$ such that $|z_k|\to\infty$ as $k\to\infty$. Then, using the previous theorem, we conclude that a matrix valued entire function $P(z)$ of the form $(\ref{Product})$ exists. Also, it can be shown that $G(z)=\lim_{k\to\infty}G_k(z)$ converges uniformly on every bounded set with respect to the operator norm. This completes the proof.
\end{proof}
The following theorem is an extended version of the previous theorem. Here, we simultaneously factorize two matrix valued meromorphic functions in $\mathbb{C}$.
\begin{thm}
\label{Factorization}
Let $A(z)$ and $B(z)$ be two $n\times n$ matrix valued meromorphic functions such that $\det A(0)\neq 0$ and $\det B(0)\neq 0$. Then there exists an $n\times n$ matrix valued entire function $P(z)$ of the form $(\ref{Product})$ such that
$$\tilde{A}(z)=P(z)A(z);\hspace{.3cm}\tilde{B}(z)=P(z)B(z),$$
where $\tilde{A}(z)$ and $\tilde{B}(z)$ are $n\times n$ matrix valued entire functions.
\end{thm}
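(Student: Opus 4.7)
The plan is to imitate the proof of the preceding theorem, but at each pole location use a single orthogonal projection that simultaneously absorbs the pole vectors of both $A(z)$ and $B(z)$. If both functions are already entire there is nothing to prove; otherwise let $z_1\neq 0$ be a point of smallest modulus that is a pole of $A(z)$ or of $B(z)$. Write $N_1^A$ for the linear span of the pole vectors of $A(z)$ at $z_1$ and $N_1^B$ for the analogous space for $B(z)$. Set $N_1=N_1^A+N_1^B\subseteq\mathbb{C}^n$ and let $P_1$ be the orthogonal projection onto $N_1$. I would then study the partial pole multiplicities at $z_1$ of
\[
\tilde A_1(z)=\bigl(I_n-\tfrac{z}{z_1}P_1\bigr)A(z),\qquad \tilde B_1(z)=\bigl(I_n-\tfrac{z}{z_1}P_1\bigr)B(z).
\]

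The crucial step is to verify that this joint projection $P_1$ still reduces each partial pole multiplicity of $A(z)$ (respectively $B(z)$) at $z_1$ by one, even though $N_1$ may strictly contain the individual pole-vector subspace $N_1^A$ (respectively $N_1^B$). For this I would rerun the chain analysis of the preceding theorem, decomposing every vector as $\tilde v_j=\tilde v_{j1}+\tilde v_{j2}$ with $\tilde v_{j1}\in N_1$ and $\tilde v_{j2}\in N_1^\perp$, and translate a hypothetical pole chain of $\tilde A_1(z)$ into data about $A(z)$ through the identity $\tilde A_1(z)^{-1}=A(z)^{-1}\bigl(I_n-\tfrac{z}{z_1}P_1\bigr)^{-1}$. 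The case $\tilde v_{11}\neq 0$ yields a pole chain of $A(z)$ of length $m+1$, exactly as before. The case $\tilde v_{11}=0$ forces $-z_1\tilde v_{21}+\tilde v_{12}$ to be a pole vector of $A(z)$, hence to lie in $N_1^A\subseteq N_1$; taking the $N_1^\perp$-component then gives $\tilde v_{12}=0$, and combined with $\tilde v_{11}=0$ this contradicts $\tilde v_1\neq 0$. The converse direction uses $v_{12}=0$, which holds because the leading term $v_1$ of any pole chain of $A(z)$ lies in $N_1^A\subseteq N_1$, and is otherwise word-for-word the corresponding step of the preceding proof. The identical argument with $B(z)$ in place of $A(z)$ handles $\tilde B_1(z)$.

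With this reduction in hand I would proceed exactly as in the preceding theorem: append the exponential prefactor and set
\[
A_1(z)=\exp\bigl(\tfrac{z}{z_1}P_1\bigr)\bigl(I_n-\tfrac{z}{z_1}P_1\bigr)A(z),\qquad B_1(z)=\exp\bigl(\tfrac{z}{z_1}P_1\bigr)\bigl(I_n-\tfrac{z}{z_1}P_1\bigr)B(z),
\]
noting that the exponential is entire and everywhere invertible and therefore does not alter partial pole multiplicities. Iterate, at each stage picking the pole of $A_k(z)$ or $B_k(z)$ nearest to the origin and applying the corresponding joint elementary factor. If the combined pole set of $A$ and $B$ is finite the process terminates after finitely many steps; if infinite, the selected points $\{z_k\}$ can accumulate only at $\infty$, so convergence of the resulting infinite product to an entire function $P(z)$ of the form $(\ref{Product})$ is guaranteed by the first theorem of this section applied to the sequence $\{P_k,z_k\}$. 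By construction $P(z)A(z)$ and $P(z)B(z)$ are both entire.

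The main obstacle is the very first step above: checking that a single projection onto the possibly enlarged space $N_1^A+N_1^B$ continues to reduce the pole multiplicities of each of $A(z)$ and $B(z)$ separately by one. Everything afterwards is a direct transcription of the single-function argument, and the convergence of the infinite product is absorbed into the already-proved statement about products of the form $(\ref{Product})$.
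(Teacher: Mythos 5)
Your proposal is correct and follows essentially the same route as the paper: take the pole of $A$ or $B$ nearest the origin, project onto the span $N_1=N_1^A+N_1^B$ of the combined pole vectors, check via the chain argument (using $N_1^A\subseteq N_1$, resp.\ $N_1^B\subseteq N_1$) that each factor $\bigl(I_n-\tfrac{z}{z_1}P_1\bigr)$ still lowers the partial pole multiplicities of each function by one, and then iterate, invoking the infinite-product theorem for convergence. You even supply slightly more detail than the paper does at the step it dismisses with ``can be proved as in the previous theorem.''
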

\begin{proof}
Suppose $z_1$ is a nonzero complex number nearest to the origin at which at least one of $A(z)$ or $B(z)$ has a pole. We denote $N_1(A;z_1)$ and $N_1(B;z_1)$ as the linear span of the pole vectors of $A(z)$ and $B(z)$, respectively, at $z_1$. According to our consideration, at least one of these two sets is nonempty. Now, we consider $N_1$ to be the linear span of the union of $N_1(A;z_1)$ and $N_1(B;z_1)$, and $P_1$ is the orthogonal projection matrix on $N_1$. Suppose $\dim P_1=r\leq n$ and $P_1=
\begin{bmatrix}
    I_r  & 0 \\
    0  & 0 \\
\end{bmatrix}
$ 
with respect to some orthonormal basis. For the definiteness, let us assume that $N_1(A;z_1)$ is nonempty, i.e., $A(z)$ has pole at $z_1$. Now, define
\begin{equation}
\tilde{A}_1(z)=(I_n-\frac{z}{z_1}P_1)~A(z).
\end{equation}
We denote the partial pole multiplicities of $A(z)$ at $z=z_1$ by $|k_1|,\ldots,|K_l|~(l\leq n)$. We now claim that the partial pole multiplicities of $\tilde{A}_1(z)$ at $z=z_1$ are given by the nonzero numbers among $|k_1+1|, \ldots, |k_l+1|$. Since $N_1(A;z_1)\subseteq N_1$, the previous claim, along with its converse, can be proved as in the previous theorem. Now, we define
$$A_1(z)=\exp(\frac{z}{z_1}P_1)(I_n-\frac{z}{z_1}P_1)A(z).$$
Similarly, we define 
$$B_1(z)=\exp(\frac{z}{z_1}P_1)(I_n-\frac{z}{z_1}P_1)B(z).$$
Note that if $N_1(B;z_1)$ is nonempty, the above definition of $B_1(z)$ can be justified as in the case of $A_1(z)$ and the change of partial pole multiplicities between $B_1(z)$ and $B(z)$ can be observed. If $N_1(B;z_1)$ is empty, in that case, the definition of $B_1(z)$ can still be justified because there is no difference of poles and partial pole multiplicities between $B_1(z)$ and $B(z)$. Now, if we continue this process inductively, the desired factorizations can be obtained as in the previous theorem.
\end{proof}

\section{de Branges matrices and examples}
\label{Sec-5}
This section extends the idea of de Branges matrices introduced by Golinskii and Mikhailova in \cite{Golinskii}, to study them in connection with the de Branges spaces of vector valued entire functions. Some examples of de Branges matrices are also discussed here. We consider two $m\times m$ $(m=2n)$ signature matrices $j_m$ and $\mathscr{J}_m$ satisfying the condition $M^*\mathscr{J}_m M=j_m$, where
\begin{equation}
j_m=\begin{bmatrix}
    I_n & 0 \\
    0  & -I_n \\
\end{bmatrix};\hspace{.2cm}
\mathscr{J}_m=\begin{bmatrix}
    0 & iI_n \\
    -iI_n  & 0 \\
\end{bmatrix};\hspace{.2cm}
M=\frac{1}{\sqrt{2}}\begin{bmatrix}
    iI_n & -iI_n \\
    I_n  & I_n \\
\end{bmatrix}.
\end{equation}
Let 
\begin{equation}
\label{Basic matrix}
\mathcal{A}(z)=\begin{bmatrix}
    a_{11}(z) & a_{12}(z) \\
    a_{21}(z)  & a_{22}(z) \\
\end{bmatrix}
\end{equation}
be an $m\times m$ matrix valued meromorphic function in $\mathbb{C}_+$ and 
\begin{equation}
\label{Secondary matrix}
U(z)=\mathcal{A}(z)M=\begin{bmatrix}
    u_{11}(z) & u_{12}(z) \\
    u_{21}(z)  & u_{22}(z) \\
\end{bmatrix},
\end{equation}
where the entries $a_{rt}(z)$ and $u_{rt}(z)$ are of size $n\times n$.
The following lemma describes the intimate connections between the entries of the $\mathscr{J}_m$-contractive matrix valued meromorphic functions in $\mathbb{C}_+$.
\begin{lemma}
\label{Meromorphic case}
Let $\mathcal{A}(z)$ be an element of the class $P(\mathscr{J}_m)$. Then the following implications hold:
\begin{enumerate}
\item $U(z)^*\mathscr{J}_m U(z)\leq j_m$ for all $z\in\mathbb{C}_+$, where $\mathcal{A}(z)$ is holomorphic.
\item $u_{12}(z)$ is invertible for all $z\in\mathbb{C}_+$, where $\mathcal{A}(z)$ is holomorphic.
\item The $n\times n$ matrix valued functions $\Phi(z)=-iu_{22}(z)u_{12}^{-1}(z)\in\mathcal{C}^{n\times n}$ and $\chi(z)=u_{12}^{-1}(z)u_{11}(z)\in\mathcal{S}^{n\times n}$.
\item The $n\times n$ matrix valued function $\frac{u_{12}^{-1}}{\rho_i}\in H_{n\times n}^2$.
\end{enumerate}
Moreover, if $\mathcal{A}(z)$ belongs to $U(\mathscr{J}_m)$, then
\begin{enumerate}
\item[(5)] The $n\times n$ matrix valued function $\chi(z)=u_{12}^{\#}(z)\{u_{11}^{\#}(z)\}^{-1}\in\mathcal{S}^{n\times n}_{in}$.
\item[(6)] The $n\times n$ matrix valued function $\frac{-i\{u_{11}^{\#}\}^{-1}}{\rho_i}\in H_{n\times n}^2$.
\end{enumerate}
\end{lemma}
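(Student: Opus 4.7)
The plan is to analyze the block inequality $U^* \mathscr{J}_m U \preceq j_m$ (claim (1)) and extract the remaining assertions by elementary matrix algebra together with Carath\'eodory-class integral estimates.

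Claim (1) is the one-line computation $U^* \mathscr{J}_m U = M^* \mathcal{A}^* \mathscr{J}_m \mathcal{A} M \preceq M^* \mathscr{J}_m M = j_m$. For (2), I would suppose $u_{12}(z)v = 0$ for some unit vector $v$ at a point of holomorphy of $\mathcal{A}$ and test (1) on $\xi = \begin{pmatrix} 0 \\ v \end{pmatrix}$: the inequality gives $0 = \xi^* U^* \mathscr{J}_m U \xi \leq \xi^* j_m \xi = -1$, a contradiction. Expanding (1) in $n \times n$ blocks, the $(2,2)$-block reads $i(u_{22}^* u_{12} - u_{12}^* u_{22}) \succeq I_n$; multiplying by $u_{12}^{-*}$ on the left and $u_{12}^{-1}$ on the right yields $\Phi + \Phi^* \succeq u_{12}^{-*} u_{12}^{-1} \succ 0$, so $\Phi$ has strictly positive real part at every point of holomorphy. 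For the contractive property of $\chi$, I would test (1) on $\xi = \begin{pmatrix} v \\ -\chi(z)v \end{pmatrix}$, which makes the top block of $U\xi$ vanish and collapses (1) to $v^*(I_n - \chi^*\chi)v \geq 0$. Holomorphy of $\Phi$ and $\chi$ in $\mathbb{C}_+$ then follows from meromorphy plus these bounds, since neither $\mathrm{Re}\,\Phi \succeq 0$ nor $\|\chi\| \leq 1$ can persist on a punctured neighborhood of an interior pole.

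For (4), I would combine $u_{12}^{-*} u_{12}^{-1} \preceq 2\,\mathrm{Re}\,\Phi$ from (3) with the Herglotz representation of $\Phi$ (an integral against a nondecreasing matrix measure $\sigma$ with $\int \frac{d(\mathrm{tr}\,\sigma)}{1+x^2} < \infty$) and the explicit two-Cauchy-kernel convolution $\int \frac{du}{((t-u)^2+y^2)(u^2+(y+1)^2)} = \frac{\pi(2y+1)}{y(y+1)(t^2+(2y+1)^2)}$; together with the pointwise estimate $\frac{1}{t^2+(2y+1)^2} \leq \frac{1}{1+t^2}$, this gives a bound on $\int_{\mathbb R} \|u_{12}^{-1}(x+iy)/\rho_i(x+iy)\|_{HS}^2\,dx$ uniform in $y > 0$. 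For (5), in $U(\mathscr{J}_m)$ the inequality (1) becomes equality on $\mathbb{R}$, so the $\chi$-test-vector argument yields $\chi^* \chi = I_n$ a.e.\ on $\mathbb{R}$ and thus $\chi \in \mathcal{S}_{in}^{n \times n}$; the alternate formula $\chi = u_{12}^\# (u_{11}^\#)^{-1}$ follows from $\chi^{-1} = \chi^*$ a.e.\ on $\mathbb{R}$, the identity $u_{ij}^\# = u_{ij}^*$ on $\mathbb{R}$, and analytic continuation.

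For (6), I would mirror the strategy of (4). From $(u_{11}^\#)^{-1} = u_{12}^{-\#}\chi$ and $\chi \in \mathcal{S}_{in}^{n\times n}$ one obtains the pointwise trace bound $\|(u_{11}^\#)^{-1}(z)\|_{HS} \leq \|(u_{12}^\#)^{-1}(z)\|_{HS}$, reducing (6) to $(u_{12}^\#)^{-1}/\rho_i \in H^2_{n\times n}$; this last statement I would obtain by applying (4) to the auxiliary $\mathscr{J}_m$-inner function built from $\mathcal{A}^{-1}$ via the inner reflection $\mathcal{A}^\#(z) = \mathscr{J}_m \mathcal{A}^{-1}(z)\mathscr{J}_m$, valid in $\mathbb{C}_+$ for $\mathcal{A} \in U(\mathscr{J}_m)$, which exchanges the roles of $u_{12}$ and $u_{11}^\#$ in the block structure. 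The factor $-i$ is cosmetic for $H^2$-membership. The main obstacle I anticipate is the two-Cauchy-kernel estimate underlying (4); identifying the correct auxiliary companion for (6) is largely bookkeeping once (4) is in hand.
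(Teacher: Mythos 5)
The paper does not actually prove this lemma --- it disposes of it in one line by citing Lemma $4.35$ of Arov--Dym --- so your proposal has to be judged against that standard argument, which items (1)--(5) essentially reproduce correctly. The congruence $U^*\mathscr{J}_mU=M^*\mathcal{A}^*\mathscr{J}_m\mathcal{A}M\preceq M^*\mathscr{J}_mM=j_m$, the test vectors $(0,v)^{\tau}$ and $(v,-\chi(z)v)^{\tau}$, the $(2,2)$-block computation giving $\Phi+\Phi^*\succeq u_{12}^{-*}u_{12}^{-1}$, the Poisson-kernel convolution identity and the bound $\frac{2y+1}{(y+1)(t^2+(2y+1)^2)}\leq\frac{2}{1+t^2}$ all check out, and the equality case on $\mathbb{R}$ does yield that $\chi$ is inner with the boundary identity $\chi=u_{12}^{\#}\{u_{11}^{\#}\}^{-1}$ (understood as a pseudocontinuation; the paper is equally loose on this point). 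You should add a sentence on why $u_{12}^{-1}$, $\Phi$ and $\chi$ extend holomorphically across the points of $\mathbb{C}_+$ where $\mathcal{A}$ has poles, but your bounds make that routine.

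The genuine gap is in (6). You propose to get $\{u_{12}^{\#}\}^{-1}/\rho_i\in H^2_{n\times n}$ by ``applying (4) to the auxiliary $\mathscr{J}_m$-inner function built from $\mathcal{A}^{-1}$ via $\mathcal{A}^{\#}=\mathscr{J}_m\mathcal{A}^{-1}\mathscr{J}_m$.'' But $\mathcal{A}^*\mathscr{J}_m\mathcal{A}\preceq\mathscr{J}_m$ forces $\mathcal{A}^{-*}\mathscr{J}_m\mathcal{A}^{-1}\succeq\mathscr{J}_m$, so $\mathcal{A}^{-1}$, and hence $\mathscr{J}_m\mathcal{A}^{-1}\mathscr{J}_m$, is $\mathscr{J}_m$-\emph{expansive} in $\mathbb{C}_+$; neither lies in $P(\mathscr{J}_m)$, so assertions (1)--(4) cannot be invoked for them. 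Moreover, the blocks $u_{ij}^{\#}$ are the blocks of $M^*\mathcal{A}^{\#}$, not of $\mathcal{B}M$ for your candidate $\mathcal{B}$, so the bookkeeping that is supposed to ``exchange $u_{12}$ and $u_{11}^{\#}$'' does not line up as written. Two standard repairs: (i) use a companion that genuinely lies in $P(\mathscr{J}_m)$, e.g.\ the transposed inverse $z\mapsto\{\mathcal{A}(z)^{\tau}\}^{-1}=\mathscr{J}_m\,\overline{\mathcal{A}(\bar z)}\,\mathscr{J}_m$, for which the two sign changes coming from $\mathscr{J}_m^{\tau}=-\mathscr{J}_m$ cancel and the desired block appears in the $(1,2)$ slot up to a transposition that is harmless for $H^2$ membership; or (ii) bypass the auxiliary function: on $\mathbb{R}$ one has $u^{\#}(x)=u(x)^*$ and $\chi(x)$ unitary, hence the Hilbert--Schmidt norms of $\{u_{11}^{\#}(x)\}^{-1}$, $\{u_{12}^{\#}(x)\}^{-1}$ and $u_{12}(x)^{-1}$ coincide, so the boundary function of $\{u_{11}^{\#}\}^{-1}/\rho_i$ is in $L^2$ by (4), and $H^2_{n\times n}$ membership follows from the Smirnov maximum principle once $\{u_{11}^{\#}\}^{-1}\in\mathcal{N}_+^{n\times n}$ is checked. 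The latter route is in the spirit of the Smirnov-maximum-principle argument the paper itself uses in its final section.
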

\begin{proof}
The proof of this lemma is similar to Lemma $4.35$ in \cite{ArD08}.
\end{proof}
Now, we extend the definition of de Branges matrices for $m\times m$ matrix valued functions, which was hinted in \cite{ArD01} (section $7$).
\begin{defn}
Let $\mathcal{A}(z)$ belongs to the class $U(\mathscr{J}_m)$. Then $\mathcal{A}(z)$ is  said to be a de Branges matrix if the $n\times n$ matrix valued function
\begin{equation}
\Phi(z)=-iu_{22}(z)u_{12}^{-1}(z)=[a_{22}(z)-ia_{21}(z)]~[a_{11}(z)+ia_{12}(z)]^{-1}\label{Main function}
\end{equation}
is holomorphic in $\mathbb{R}$.
\end{defn}
 \begin{examp}
 \label{Example 1 }
 Let $\mathcal{A}(z)$ belongs to the class $U(\mathscr{J}_m)$ and holomorphic in $\mathbb{R}$. Then $\mathcal{A}(z)$ is a de Branges matrix. Indeed, let $\mu\in\mathbb{R}$ be a pole of $\Phi(z)$. Then $a_{11}(\mu)+ia_{12}(\mu)$ is not invertible, as $\mathcal{A}(z)$ is holomorphic at $\mu$. Thus $\det(a_{11}(\mu)+ia_{12}(\mu))=0$ and $\det(a_{11}^*(\mu)-ia_{12}^*(\mu))=0$. Also, since $\mathcal{A}(z)$ is $\mathscr{J}_m$-inner, we have 
 $$a_{11}(\mu)a_{12}^*(\mu)=a_{12}(\mu)a_{11}^*(\mu),$$
 which implies that $\det(a_{11}(\mu)a_{11}^*(\mu)+a_{12}(\mu)a_{12}^*(\mu))=0$. Due to the Minkowski determinant theorem, we conclude that $\det(a_{11}(\mu)a_{11}^*(\mu)) = 0$ and $\det(a_{12}(\mu)a_{12}^*(\mu))=0$. But this is contradicting the fact that $\mathcal{A}(\mu)$ is $\mathscr{J}_m$-unitary.
 \end{examp}
Note that the previous example implies that the elementary Blaschke-Potapov factors (see, \cite{ArD08}, chapter $4.2$) of first and second kind are de Branges matrices.\\
Now, in the following theorem, we describe a representation of the de Branges matrices using the factorization of matrix valued meromorphic functions discussed in the preceding section. This representation connects a de Branges matrix to a de Branges space and its associated function.
\begin{thm}
Let $\mathcal{A}(z)$ be a de Branges matrix of the form $(\ref{Basic matrix})$ and $U(z)=\mathcal{A}(z)M$ is of the form $(\ref{Secondary matrix})$. Then the following implications hold:
\begin{enumerate}
\item $\mathcal{A}(z)$ can have the following representation
\begin{equation}
\label{Real representation}
\mathcal{A}(z)=\begin{bmatrix}
    S(z)^{-1} & 0 \\
    0  & S(z)^{-1} \\
\end{bmatrix}
\begin{bmatrix}
    \tilde{a}_{11}(z) & \tilde{a}_{12}(z) \\
    \tilde{a}_{21}(z)  & \tilde{a}_{22}(z) \\
\end{bmatrix},
\end{equation}
where $\tilde{a}_{rt}(z)$ are $n\times n$ matrix valued entire functions and $S(z)$ is of the form $(\ref{Product})$.
\item The entire $n\times 2n$ matrix valued function $\mathfrak{E}(z)=[E_-(z)~E_+(z)]$, where
\begin{equation}
E_+(z)=\tilde{a}_{11}(z)+i\tilde{a}_{12}(z)~ \mbox{and} ~E_-(z)=\tilde{a}_{11}(z)-i\tilde{a}_{12}(z),
\end{equation}
generates a de Branges space $\mathcal{B}(\mathfrak{E})$.
\item $S(z)$ is an associated function of the de Branges space $\mathcal{B}(\mathfrak{E})$.
\end{enumerate}
\end{thm}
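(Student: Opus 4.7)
The plan is to treat (1) as the main structural step, and deduce (2) and (3) by direct algebra from Lemma~\ref{Meromorphic case} once the matrix $S$ from (1) is in hand.

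For (1), I would first use the fact that as a member of $U(\mathscr{J}_m)$, the function $\mathcal{A}(z)$ admits a meromorphic continuation to all of $\mathbb{C}$ via the $\mathscr{J}_m$-inner reflection $\mathcal{A}^{-1}(z) = \mathscr{J}_m \mathcal{A}^{\#}(z) \mathscr{J}_m$; the de Branges hypothesis that $\Phi$ is holomorphic on $\mathbb{R}$ controls what happens across the real axis. Then I would apply the factorization machinery of Section~\ref{Sec-4}: Theorem~\ref{Factorization} produces a common left factor for two meromorphic blocks, and the same inductive construction (taking $N_k$ at each pole $z_k$ to be the linear span of the pole vectors of \emph{all four} blocks $a_{11}, a_{12}, a_{21}, a_{22}$ simultaneously) yields a single entire matrix $S(z)$ of the form (\ref{Product}) for which each $\tilde a_{rt}(z) := S(z) a_{rt}(z)$ is entire. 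Since $S(z)$ then serves as a common left factor on each of the two row blocks of $\mathcal{A}$, the representation (\ref{Real representation}) follows immediately.

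For (2) and (3), the key step is to express $E_{\pm}$ in terms of $u_{11}$ and $u_{12}$. Block-wise expansion of $U(z) = \mathcal{A}(z) M$ gives $u_{11} = (i a_{11} + a_{12})/\sqrt{2}$ and $u_{12} = (-i a_{11} + a_{12})/\sqrt{2}$, so that $a_{11} + i a_{12} = i\sqrt{2}\, u_{12}$ and $a_{11} - i a_{12} = -i\sqrt{2}\, u_{11}$. Hence $E_+ = i\sqrt{2}\,S u_{12}$ and $E_- = -i\sqrt{2}\,S u_{11}$. From here (2) is immediate: $\det E_+ \not\equiv 0$ because $\det S \not\equiv 0$ and $\det u_{12} \not\equiv 0$ (Lemma~\ref{Meromorphic case}(2)), and $E_+^{-1} E_- = -u_{12}^{-1} u_{11} = -\chi(z) \in \mathcal{S}^{n\times n}_{in}$ by Lemma~\ref{Meromorphic case}(5). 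For (3), the cancellations $E_+^{-1} S = (-i/\sqrt{2})\, u_{12}^{-1}$ and $E_-^{-1} S = (i/\sqrt{2})\, u_{11}^{-1}$ reduce the two conditions in (\ref{Associated function condition}) to parts (4) and (6) of Lemma~\ref{Meromorphic case}. The first is immediate; for the second, apply the involution $f \mapsto f^{\#}$ (which interchanges $H^2_{n\times n}$ and $(H^2_{n\times n})^\perp$) to $-i(u_{11}^{\#})^{-1}/\rho_i \in H^2_{n\times n}$ and use $\rho_i^{\#} = -\rho_{-i}$ to recover $u_{11}^{-1}/\rho_{-i} \in (H^2_{n\times n})^\perp$. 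Since the product construction guarantees $\det S(w) \neq 0$ for all $w$ outside the sequence $\{z_k\}$, Theorem~\ref{Associated function} then identifies $S$ as an associated function of $\mathcal{B}(\mathfrak{E})$.

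The only non-routine step is the extension of Theorem~\ref{Factorization} from two meromorphic factors to four, carried out \emph{simultaneously and with a single common factor of the form} (\ref{Product}). The extension is formally identical to the two-factor argument, but one must check that enlarging $P_k$ to project onto the union of the pole vector subspaces of all four blocks at $z_k$ still gives the correct reduction of partial pole multiplicities of each block, and that the resulting infinite product (\ref{Product}) still converges uniformly on bounded sets; once this is done, parts (2) and (3) are essentially bookkeeping with Lemma~\ref{Meromorphic case}.
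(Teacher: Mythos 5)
Your proposal follows essentially the same route as the paper: existence of $S(z)$ via the simultaneous-factorization machinery of Section \ref{Sec-4}, the identities $E_+=i\sqrt{2}\,S\,u_{12}$ and $E_-=-i\sqrt{2}\,S\,u_{11}$ combined with parts (2) and (5) of Lemma \ref{Meromorphic case} for assertion (2), and parts (4) and (6) together with Theorem \ref{Associated function} for assertion (3). You are in fact somewhat more careful than the paper, which cites Theorem \ref{Factorization} (stated for two functions) without commenting on the four-block extension or on the meromorphic continuation of $\mathcal{A}(z)$ to all of $\mathbb{C}$; both points you identify and handle correctly.
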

\begin{proof}
Due to Theorem \ref{Factorization}, the existence of the $n\times n$ matrix valued entire function $S(z)$ is evident. Now, $(1)$ follows after letting $\tilde{a}_{rt}(z)=S(z)~a_{rt}(z)$ for $r,t\in\{1,2\}$. Since
$$E_+(z)=i\sqrt{2}S(z)u_{12}(z)~\mbox{and}~E_-(z)=-i\sqrt{2}S(z)u_{11}(z),$$
$(2)$ follows from the assertions $(2)$ and $(5)$ of Lemma \ref{Meromorphic case}. To show that $S(z)$ is an associated function of the de Branges space $\mathcal{B}(\mathfrak{E})$, it is sufficient to show that 
$$\frac{E_+^{-1}S}{\rho_i}\in H_{n\times n}^2\hspace{.3cm}\mbox{and}\hspace{.3cm}\frac{S^{\#}\{E_-^{\#}\}^{-1}}{\rho_i}\in H_{n\times n}^2.$$
Now, $(3)$ follows from the assertions $(4)$ and $(6)$ of Lemma \ref{Meromorphic case}.
\end{proof}
We conclude this section with another example of de Branges matrices derived from a particular class of compact operators in a separable Hilbert space $\mathfrak{X}$ in terms of their characteristic matrix functions. The characteristic matrix functions are crucial in the theory of nonselfadjoint operators in Hilbert spaces. A detailed study of them can be found in \cite{Brodskii}. We consider $\mathscr{K}_0$ as the family of compact operators $T$ in $\mathfrak{X}$ having the following additional conditions:
\begin{enumerate}
\item The imaginary part $\frac{T-T^*}{2i}$ of $T$ is of rank $m=2n$.
\item $T$ is a non-dissipative operator.
\item $T$ has no real eigenvalues.
\end{enumerate}
Note that a similar family of operators was considered in \cite{Gubreev} to deal with a functional model problem in connection with the de Branges spaces of scalar valued entire functions. Observe that every operator $T\in \mathscr{K}_0$ is a completely nonselfadjoint operator in $\mathfrak{X}$. Otherwise, it will contradict the last condition in the definition of $\mathscr{K}_0$ .
\begin{examp}
Let $T\in \mathscr{K}_0$ be any operator, and the signature matrix $\mathscr{J}_m=(\mathscr{J}_{rt})$ as defined earlier. Then we have the following representation (see, \cite{Livshits}, Chapter I)
\begin{equation}
\frac{T-T^*}{i}u=\sum_{r,t=1}^m\langle u,u_r\rangle_\mathfrak{X}\mathscr{J}_{rt}~u_t,
\end{equation}
where $u, u_1, \ldots, u_m\in\mathfrak{X}$. Also, the characteristic matrix function of $T$ is given by
\begin{equation}
W_T(z)=I_m+iz(\langle (I_\mathfrak{X}-zT)^{-1}u_r,u_t\rangle_\mathfrak{X})\mathscr{J}_m.
\end{equation}
We claim that $W_T(z)$ is a de Branges matrix. Since $T$ is a compact operator, the characteristic matrix function $W_T(z)$ is meromorphic in $\mathbb{C}$. Also, it can be proved that $W_T(z)$ belongs to $U(\mathscr{J}_m)$. We consider 
$$W_T(z)=\begin{bmatrix}
    w_{11}(z) & w_{12}(z) \\
    w_{21}(z)  & w_{22}(z) \\
\end{bmatrix}
\hspace{.2cm}\mbox{and}\hspace{.2cm}\Phi(z)=[w_{22}(z)-iw_{21}(z)]~[w_{11}(z)+iw_{12}(z)]^{-1},
$$
where $w_{rt}$ are matrices of order $n\times n$ for $r,t\in\{1,2\}$. Due to condition $(3)$ in the definition of $\mathscr{K}_0$, we conclude that $W_T(z)$ is holomorphic in $\mathbb{R}$. Thus, as in example \ref{Example 1 }, $\Phi(z)$ is holomorphic in $\mathbb{R}$. This justifies our claim.
\end{examp}

\section{Integral representation of $\Phi(z)$ and parametrization of de Branges matrices}
\label{Sec-6}
In this section, we discuss an integral representation of the $n\times n$ matrix valued function  $\Phi(z)$ given by $(\ref{Main function})$ corresponding to a de Branges matrix of the form $(\ref{Basic matrix})$, which has a representation of the form $(\ref{Real representation})$. Additionally, we derive a parametrization of de Branges matrices based on the integral representation of $\Phi(z)$. Since $\Phi(z)\in \mathcal{C}^{n\times n}$, for all $z\in\mathbb{C}_+$, it has the following integral representation 
\begin{equation}
\Phi(z)=iQ-izP+\frac{1}{\pi i}\int_{-\infty}^\infty \left\{\frac{1}{x-z}-\frac{x}{1+x^2}\right\}~d\sigma(x),\label{Integral representation}
\end{equation}
where $Q=Q^*$, $P\succeq 0$ are $n\times n$ complex matrices and $\sigma(x)$ is a nondecreasing $n\times n$ matrix valued function on $\mathbb{R}$ such that $\int_{-\infty}^\infty \frac{d(\mbox{trace}~\sigma(x))}{1+x^2}<\infty$. We find the measure $d\sigma$ involved in the equation $(\ref{Integral representation})$. First we consider the matrix valued function 
\begin{equation}
\mathcal{A}^{\#}(z)=\mathcal{A}(\overline{z})^*=\mathscr{J}_m~\mathcal{A}(z)^{-1}\mathscr{J}_m=\begin{bmatrix}
    a_{11}^{\#}(z) & a_{21}^{\#}(z) \\
    a_{12}^{\#}(z)  & a_{22}^{\#}(z) \\
\end{bmatrix},
\end{equation}
which gives the inverse of $\mathcal{A}(z)$ as
\begin{equation}
\label{Inverse}
\mathcal{A}(z)^{-1}=\begin{bmatrix}
    a_{22}^{\#}(z) & -a_{12}^{\#}(z) \\
    -a_{21}^{\#}(z)  & a_{11}^{\#}(z) \\
\end{bmatrix}.
\end{equation}
Due to $(\ref{Inverse})$, $\Phi(z)$ can be rewritten as
\begin{equation}
\Phi(z)=[a_{11}^{\#}(z)+i a_{12}^{\#}(z)]^{-1}[a_{22}^{\#}(z)-i a_{21}^{\#}(z)],\label{Main function 1}
\end{equation}
and
\begin{equation}
\Phi^{\#}(z)=[a_{11}^{\#}(z)-i a_{12}^{\#}(z)]^{-1}[a_{22}^{\#}(z)+i a_{21}^{\#}(z)].
\end{equation}
Therefore, we have
\begin{align}
\frac{\Phi(z)+\Phi^{\#}(z)}{2}&=[a_{11}^{\#}(z)-i a_{12}^{\#}(z)]^{-1}[a_{11}(z)+ia_{12}(z)]^{-1}\nonumber\\
&=S^{\#}(z)\{E_-^{\#}(z)\}^{-1}E_+^{-1}(z)S(z),\label{Real part}
\end{align}
and
\begin{equation}
\frac{\Phi(z)-\Phi^{\#}(z)}{2i}=iS^{\#}(z)\{E_-^{\#}(z)\}^{-1}E_+^{-1}(z)S(z)-i\Phi(z).\label{Imaginary part}
\end{equation}
We also obtain
\begin{multline}
[a_{11}(z)+i a_{12}(z)]^*~[\Phi(z)+\Phi(z)^*]~[a_{11}(z)+i a_{12}(z)]\\
=\begin{bmatrix}
    I_n & -iI_n \\
\end{bmatrix}
[\mathscr{J}_m-\mathcal{A}(z)^*\mathscr{J}_m\mathcal{A}(z)]\begin{bmatrix}
    I_n  \\
    iI_n  \\
\end{bmatrix}
+2I_n.
\end{multline}
Thus for every $z\in\mathbb{C}$, where $\mathcal{A}(z)$ is holomorphic, we have
\begin{equation}
\label{Measure}
Re(\Phi(z)) \left\{
    \begin{array}{ll}
         \geq S(z)^*E_+^{-*}(z)E_+^{-1}(z)S(z),  & \mbox{if } z\in\mathbb{C}_+, \\
         =S(z)^*E_+^{-*}(z)E_+^{-1}(z)S(z), & \mbox{if } z\in\mathbb{R},\\
         \leq S(z)^*E_+^{-*}(z)E_+^{-1}(z)S(z), & \mbox{if} z\in\mathbb{C}_-.
    \end{array} \right.
\end{equation}
 Since $\Phi(z)$ is holomorphic on $\mathbb{R}$, due to $(\ref{Measure})$,
$$d\sigma(x)=S(x)^*E_+^{-*}(x)E_+^{-1}(x)S(x)dx,$$ 
and for all $z\in\mathbb{C}_+$
\begin{equation}
\Phi(z)=iQ-izP+\frac{1}{\pi i}\int_{-\infty}^\infty \left\{\frac{1}{x-z}-\frac{x}{1+x^2}\right\}~S(x)^*E_+^{-*}(x)E_+^{-1}(x)S(x)dx\label{Upper half plane}
\end{equation}
Also, due to $(\ref{Real part})$, we get the following representation of $\Phi(z)$ for all $z\in\mathbb{C}_-$
\begin{multline}
\Phi(z)=iQ-izP+\frac{1}{\pi i}\int_{-\infty}^\infty \left\{\frac{1}{x-z}-\frac{x}{1+x^2}\right\}~S(x)^*E_+^{-*}(x)E_+^{-1}(x)S(x)dx\\ + S^{\#}(z)\{E_-^{\#}(z)\}^{-1}E_+^{-1}(z)S(z).\label{Lower half plane}
\end{multline}
Given a de Branges matrix of the form $(\ref{Real representation})$, we can recover $\tilde{a}_{rt}$ for $r,t\in\{1,2\}$ in terms of $E_+(z)$, $E_-(z)$, $S(z)$ and $\Phi(z)$. It is immediate that
\begin{equation}
\tilde{a}_{11}(z)=\frac{E_+(z)+E_-(z)}{2}\hspace{.3cm}\mbox{and}\hspace{.3cm}\tilde{a}_{12}(z)=\frac{E_+(z)-E_-(z)}{2i}.\label{Upper entries}
\end{equation}
Due to $(\ref{Main function})$, we have
$$\tilde{a}_{22}(z)-i\tilde{a}_{21}(z)=S(z)\Phi(z)S(z)^{-1}[\tilde{a}_{11}(z)+i\tilde{a}_{12}(z)],$$
and due to $(\ref{Main function 1})$, we have
$$\tilde{a}_{22}(z)+i\tilde{a}_{21}(z)=S(z)\Phi^{\#}(z)S(z)^{-1} [\tilde{a}_{11}(z)-i\tilde{a}_{12}(z)],$$
which give together
\begin{multline*}
\tilde{a}_{22}(z)=S(z)\left[\frac{\Phi(z)+\Phi^{\#}(z)}{2}\right]S(z)^{-1}\tilde{a}_{11}(z)\\-S(z)\left[\frac{\Phi(z)-\Phi^{\#}(z)}{2i}\right]S(z)^{-1}\tilde{a}_{12}(z),
\end{multline*}
and
\begin{multline*}
\tilde{a}_{21}(z)=-S(z)\left[\frac{\Phi(z)+\Phi^{\#}(z)}{2}\right]S(z)^{-1}\tilde{a}_{12}(z)\\-S(z)\left[\frac{\Phi(z)-\Phi^{\#}(z)}{2i}\right]S(z)^{-1}\tilde{a}_{11}(z).
\end{multline*}
Now using $(\ref{Real part})$ and $(\ref{Imaginary part})$ in the previous two equations, we finally get
\begin{equation}
\tilde{a}_{22}(z)=S(z)S^{\#}(z)\{E_-^{\#}(z)\}^{-1}E_+^{-1}(z)E_-(z)+iS(z)\Phi(z)S(z)^{-1}\tilde{a}_{12}(z),\label{Lower right}
\end{equation}
and 
\begin{equation}
\tilde{a}_{21}(z)=-iS(z)S^{\#}(z)\{E_-^{\#}(z)\}^{-1}E_+^{-1}(z)E_-(z)+iS(z)\Phi(z)S(z)^{-1}\tilde{a}_{11}(z).\label{Lower left}
\end{equation}
The following theorem parametrizes de Branges matrices under consideration.
\begin{thm}
Given a de Branges matrix of the form $(\ref{Real representation})$, the following conclusions can be noted
\begin{enumerate}
\item The $n\times 2n$ matrix valued function $\mathfrak{E}(z)=[E_-(z)~E_+(z)]$, where $E_+(z)=\tilde{a}_{11}(z)+i\tilde{a}_{12}(z)$ and $E_-(z)=\tilde{a}_{11}(z)-i\tilde{a}_{12}(z)$ generates a de Branges space $\mathcal{B}(\mathfrak{E})$.
\item $S(z)$ is an associated function of the de Branges space $\mathcal{B}(\mathfrak{E})$.
\item The function $\Phi(z)$ of the form $(\ref{Main function})$ has the integral representation of the form $(\ref{Upper half plane})$ in the upper half plane and of the form $(\ref{Lower half plane})$ in the lower half plane with parameters $(P,Q)$ such that $P\succeq 0$ and $Q=Q^*$.
\end{enumerate}
Conversely, given such parameters $(P,Q)$ such that $P\succeq 0$ and $Q=Q^*$ along with a de Branges space $\mathcal{B}(\mathfrak{E})$ corresponding to an $n\times 2n$ matrix valued entire function $\mathfrak{E}(z)=[E_-(z)~E_+(z)]$ with an associated function $S(z)$, we can construct a de Branges matrix of the form $(\ref{Real representation})$.
\end{thm}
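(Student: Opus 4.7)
The forward direction is essentially already assembled in the preceding material: items (1) and (2) are a restatement of the real-representation theorem of Section \ref{Sec-5} applied to the given de Branges matrix, so I would just invoke it. Item (3) is the content of the paragraph preceding the theorem: since $\Phi\in\mathcal{C}^{n\times n}$ by Lemma \ref{Meromorphic case}(3), it admits a Carathéodory integral representation (\ref{Integral representation}); the identification (\ref{Real part})--(\ref{Measure}) pins down the measure as $d\sigma(x)=S(x)^*E_+^{-*}(x)E_+^{-1}(x)S(x)\,dx$, yielding (\ref{Upper half plane}) in $\mathbb{C}_+$; and (\ref{Lower half plane}) is obtained in $\mathbb{C}_-$ by adding the correction term dictated by (\ref{Real part}). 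The self-adjointness of $Q$ and positivity of $P$ are then automatic from membership in $\mathcal{C}^{n\times n}$.

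For the converse, starting from $(P,Q,\mathcal{B}(\mathfrak{E}),S)$, my plan is to reverse-engineer a de Branges matrix. First I would set
$$d\sigma(x):=S(x)^*E_+^{-*}(x)E_+^{-1}(x)S(x)\,dx$$
and verify $\int\frac{d(\mathrm{trace}\,\sigma)}{1+x^2}<\infty$; this is where the associated function hypothesis bites, since Theorem \ref{Associated function} gives $E_+^{-1}S/\rho_i\in H^2_{n\times n}$ and the $H^2$ norm is, up to a constant, exactly this integral. Next I would define $\Phi(z)$ on $\mathbb{C}_+$ by (\ref{Upper half plane}), which automatically places it in $\mathcal{C}^{n\times n}$ with the prescribed parameters $(P,Q)$, and extend $\Phi$ to $\mathbb{C}_-$ by (\ref{Lower half plane}). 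The blocks $\tilde{a}_{11},\tilde{a}_{12}$ are read off from (\ref{Upper entries}) and $\tilde{a}_{21},\tilde{a}_{22}$ from (\ref{Lower right})--(\ref{Lower left}); the matrix $\mathcal{A}(z)$ is then assembled via (\ref{Real representation}).

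The main obstacle is verifying that the constructed $\mathcal{A}$ lies in $U(\mathscr{J}_m)$ and that $\Phi$ is holomorphic across $\mathbb{R}$. To handle $\mathscr{J}_m$-innerness I would run the forward derivation in reverse: recovering $u_{rt}$ from $\tilde{a}_{rt}$ via $U=\mathcal{A}M$, one checks by direct substitution that $\Phi=-iu_{22}u_{12}^{-1}$ and that the identities (\ref{Real part})--(\ref{Imaginary part}) persist as algebraic consequences of the definitions. The contractive bound $U^*\mathscr{J}_mU\leq j_m$ on $\mathbb{C}_+$ then reduces to $\mathrm{Re}\,\Phi(z)\succeq S(z)^*E_+^{-*}(z)E_+^{-1}(z)S(z)$, which follows from (\ref{Upper half plane}) together with the positivity of the Poisson kernel, $P\succeq 0$, and $d\sigma\succeq 0$. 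The subtle step I expect to be hardest is the boundary behaviour on $\mathbb{R}$: the equality $\mathcal{A}(x)^*\mathscr{J}_m\mathcal{A}(x)=\mathscr{J}_m$ a.e.\ has to come from the Fatou-type limit in (\ref{Measure}) matched with (\ref{Equality on real line}) for $E_\pm$, and the consistency of (\ref{Upper half plane}) with (\ref{Lower half plane}) as boundary values forces the holomorphicity of $\Phi$ on $\mathbb{R}$. Once these points are settled, $\Phi$ is holomorphic on $\mathbb{R}$ by construction and $\mathcal{A}$ is a de Branges matrix by definition.
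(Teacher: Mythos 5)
Your forward direction and the construction scheme for the converse (define $d\sigma$, build $\Phi$ from (\ref{Upper half plane})--(\ref{Lower half plane}), recover the blocks from (\ref{Upper entries}), (\ref{Lower right}), (\ref{Lower left})) match the paper. The gap is in the key step of the converse: you claim that the $\mathscr{J}_m$-contractivity $U(z)^*\mathscr{J}_mU(z)\preceq j_m$ on $\mathbb{C}_+$ (equivalently $\mathcal{A}(z)^*\mathscr{J}_m\mathcal{A}(z)\preceq\mathscr{J}_m$, a $2n\times 2n$ matrix inequality) \emph{reduces to} the single $n\times n$ inequality $\mathrm{Re}\,\Phi(z)\succeq S(z)^*E_+^{-*}(z)E_+^{-1}(z)S(z)$. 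It does not: as the displayed identity preceding (\ref{Measure}) shows, $\mathrm{Re}\,\Phi$ controls only the compression of $\mathscr{J}_m-\mathcal{A}^*\mathscr{J}_m\mathcal{A}$ by the $2n\times n$ matrix $\begin{bmatrix}I_n\\ iI_n\end{bmatrix}$, and positivity of one compression does not imply positivity of the whole matrix. This is precisely why the paper routes the argument through the Potapov--Ginzburg transform of $W=M^*\mathcal{A}M$: it verifies by explicit block computation (using (\ref{Equality on real line}) and (\ref{Measure})) that $W$ is $j_m$-unitary a.e.\ on $\mathbb{R}$, so $\mathrm{PG}(W)$ has unitary boundary values, then shows each of the four blocks of $\mathrm{PG}(W)$ lies in $\mathcal{N}_+^{n\times n}$ (using $c=[\Phi-I_n][\Phi+I_n]^{-1}$ contractive, $E_+^{-1}S$ and $S^{\#}\{E_-^{\#}\}^{-1}$ in $\mathcal{N}_+^{n\times n}$, and $E_+^{-1}E_-\in\mathcal{S}_{in}^{n\times n}$), and only then invokes the Smirnov maximum principle to conclude $\mathrm{PG}(W)\in\mathcal{S}_{in}^{m\times m}$, i.e.\ interior contractivity. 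Note that the $(1,1)$ block $[I_n-c]S^{\#}\{E_-^{\#}\}^{-1}E_+^{-1}E_-$ has no evident pointwise bound from a Poisson-kernel argument; its interior contractivity genuinely comes from the boundary data via the maximum principle, which your plan never invokes despite the paper flagging it in the abstract as the engine of the parametrization.

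A secondary point: you treat the holomorphy of $\Phi$ across $\mathbb{R}$ as something to be extracted from ``consistency of boundary values'' of (\ref{Upper half plane}) and (\ref{Lower half plane}), which is vague. The paper gets it directly from the hypothesis that $S$ is an associated function: condition (\ref{Associated function condition}) and Theorem \ref{Associated function} make $E_+^{-1}S$ and $E_-^{-1}S$ holomorphic in $\overline{\mathbb{C}_+}$ and $\overline{\mathbb{C}_-}$ respectively, which both makes the density $S^*E_+^{-*}E_+^{-1}S$ well behaved on $\mathbb{R}$ and forces $\Phi\in\mathcal{C}^{n\times n}$ to be holomorphic on $\mathbb{R}$. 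Your observation that the associated-function hypothesis ``bites'' in the integrability of $d\sigma$ is correct but is only part of what that hypothesis is used for.
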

\begin{proof}
One side of the proof immediately follows from the previous discussion. Now, suppose $\mathcal{B}(\mathfrak{E})$ be a de Branges space corresponding to an $n\times 2n$ matrix valued entire function $\mathfrak{E}(z)=[E_-(z)~E_+(z)]$. Also, $S(z)$ is associated with $\mathcal{B}(\mathfrak{E})$. The given parameters $(P,Q)$ are such that $P\succeq 0$ and $Q=Q^*$. From the given information, we construct the function $\Phi(z)$ by using $(\ref{Upper half plane})$ and $(\ref{Lower half plane})$. Due to $(\ref{Associated function condition})$, we conclude that $\Phi(z)\in\mathcal{C}^{n\times n}$ and $\Phi(z)$ is holomorphic on $\mathbb{R}$ follows from Theorem \ref{Associated function} (see \cite{ArD08}, chapter $3.2$). $\Phi(z)$ is also satisfying $(\ref{Real part})$ and $(\ref{Imaginary part})$. We consider the following matrix valued function
$$\mathcal{A}(z)=\begin{bmatrix}
    S(z)^{-1} & 0 \\
    0  & S(z)^{-1} \\
\end{bmatrix}
\begin{bmatrix}
    \tilde{a}_{11}(z) & \tilde{a}_{12}(z) \\
    \tilde{a}_{21}(z)  & \tilde{a}_{22}(z) \\
\end{bmatrix},$$
where $\tilde{a}_{11}(z),~\tilde{a}_{12}(z)$ are defined by $(\ref{Upper entries})$ and $\tilde{a}_{21}(z),~\tilde{a}_{22}(z)$ are defined by $(\ref{Lower left})$ and $(\ref{Lower right})$ respectively. Now, it only remains to show that $\mathcal{A}(z)$ belongs to $U(\mathscr{J}_m)$. Again, we consider 
\begin{equation}
W(z)=M^*\mathcal{A}(z)M=\begin{bmatrix}
    w_{11}(z) & w_{12}(z) \\
    w_{21}(z)  & w_{22}(z) \\
\end{bmatrix}.
\end{equation}
Observe that $\mathcal{A}(z)$ belongs to $U(\mathscr{J}_m)$ if and only if $W(z)$ belongs to $U(j_m)$. Since $\mbox{Re}~\Phi(z)\succeq 0$ for all $z\in\mathbb{C}_+$,
$$w_{22}(z)=\frac{1}{2}[I_n+\Phi(z)]~[a_{11}(z)+ia_{12}(z)]$$
is invertible in $\mathbb{C}_+$ and almost everywhere on the real line. Thus the Potapov-Ginzburg transform of $W(z)$ is 
\begin{equation}
\mbox{PG}(W)(z)=\begin{bmatrix}
    w_{11}(z)-w_{12}(z)w_{22}(z)^{-1}w_{21}(z) & w_{12}(z)w_{22}(z)^{-1} \\
    -w_{22}(z)^{-1}w_{21}(z)  & w_{22}(z)^{-1} \\
\end{bmatrix}.
\end{equation}
Also, $W(z)$ is $j_m$-inner if and only if the Potapov-Ginzburg transform $\mbox{PG}(W)(z)$ belongs to $\mathcal{S}^{m\times m}_{in}$. First, we show that $W(z)$ is $j_m$-unitary almost everywhere on $\mathbb{R}$. For this purpose, we consider for $x\in\mathbb{R}$:
$$W(x)j_mW(x)^*-j_m=\begin{bmatrix}
    w_{11}w_{11}^*-w_{12}w_{12}^*-I_n & w_{11}w_{21}^*-w_{12}w_{22}^* \\
    w_{21}w_{11}^*-w_{22}w_{12}^*  & w_{21}w_{21}^*-w_{22}w_{22}^*+I_n \\
\end{bmatrix}.
$$
Now, for any $x\in\mathbb{R}$ where $\det S(x)\neq 0$, using $(\ref{Equality on real line})$, the following identities can be noted immediately
$$w_{11}(x)w_{11}(x)^*=\left[\frac{I_n+\Phi(x)^*}{2}\right]S(x)^{-1}E_+(x)E_+(x)^*S(x)^{-*}\left[\frac{I_n+\Phi(x)}{2}\right];$$
$$w_{22}(x)w_{22}(x)^*=\left[\frac{I_n+\Phi(x)}{2}\right]S(x)^{-1}E_+(x)E_+(x)^*S(x)^{-*}\left[\frac{I_n+\Phi(x)^*}{2}\right];$$
$$w_{12}(x)w_{12}(x)^*=\left[\frac{I_n-\Phi(x)}{2}\right]S(x)^{-1}E_+(x)E_+(x)^*S(x)^{-*}\left[\frac{I_n-\Phi(x)^*}{2}\right];$$
$$w_{21}(x)w_{21}(x)^*=\left[\frac{I_n-\Phi(x)^*}{2}\right]S(x)^{-1}E_+(x)E_+(x)^*S(x)^{-*}\left[\frac{I_n-\Phi(x)}{2}\right];$$
$$w_{11}(x)w_{21}(x)^*=-\left[\frac{I_n+\Phi(x)^*}{2}\right]S(x)^{-1}E_+(x)E_+(x)^*S(x)^{-*}\left[\frac{I_n-\Phi(x)}{2}\right];$$
and
$$w_{12}(x)w_{22}(x)^*=-\left[\frac{I_n-\Phi(x)}{2}\right]S(x)^{-1}E_+(x)E_+(x)^*S(x)^{-*}\left[\frac{I_n+\Phi(x)^*}{2}\right].$$
Now, using $(\ref{Measure})$, it can be proved that $W(z)$ is $j_m$-unitary almost everywhere on $\mathbb{R}$, i.e., for almost every $x\in\mathbb{R}$ the following identities hold
$$w_{11}(x)w_{11}(x)^*-w_{12}(x)w_{12}(x)^*=I_n;~w_{21}(x)w_{21}(x)^*-w_{22}(x)w_{22}(x)^*=-I_n$$
and
$$w_{11}(x)w_{21}(x)^*-w_{12}(x)w_{22}(x)^*=0.$$
Here, we only show the calculation for the first identity, and the remaining can be done similarly. Now, for any $x\in\mathbb{R}$ where $\det S(x)\neq 0$ and $\det E_+(x)\neq 0$, we have
\begin{align*}
&w_{11}(x)w_{11}(x)^*-w_{12}(x)w_{12}(x)^*\\
=&\left[\frac{I_n+\Phi(x)^*}{2}\right]S(x)^{-1}E_+(x)E_+(x)^*S(x)^{-*}\left[\frac{I_n+\Phi(x)}{2}\right]\\
&-\left[\frac{I_n-\Phi(x)}{2}\right]S(x)^{-1}E_+(x)E_+(x)^*S(x)^{-*}\left[\frac{I_n-\Phi(x)^*}{2}\right]\\
=&\left[\frac{I_n+\Phi(x)^*}{2}\right]S(x)^{-1}E_+(x)E_+(x)^*S(x)^{-*}\left[\frac{I_n+\Phi(x)}{2}-\frac{I_n-\Phi(x)^*}{2}\right]\\
&+\left[\frac{I_n+\Phi(x)^*}{2}-\frac{I_n-\Phi(x)}{2}\right]S(x)^{-1}E_+(x)E_+(x)^*S(x)^{-*}\left[\frac{I_n-\Phi(x)^*}{2}\right]\\
=&\left[\frac{I_n+\Phi(x)^*}{2}\right]S(x)^{-1}E_+(x)E_+(x)^*S(x)^{-*}\left[\frac{\Phi(x)+\Phi(x)^*}{2}\right]\\
&+\left[\frac{\Phi(x)+\Phi(x)^*}{2}\right]S(x)^{-1}E_+(x)E_+(x)^*S(x)^{-*}\left[\frac{I_n-\Phi(x)^*}{2}\right]\\
=&\left[\frac{I_n+\Phi(x)^*}{2}\right]S(x)^{-1}E_+(x)E_+(x)^*S(x)^{-*}S(x)^*E_+(x)^{-*}E_+(x)^{-1}S(x)\\
&+S(x)^*E_+(x)^{-*}E_+(x)^{-1}S(x)S(x)^{-1}E_+(x)E_+(x)^*S(x)^{-*}\left[\frac{I_n-\Phi(x)^*}{2}\right]\\
=&\left[\frac{I_n+\Phi(x)^*}{2}\right]+\left[\frac{I_n-\Phi(x)^*}{2}\right]=I_n.
\end{align*}
Since $W(z)$ is $j_m$-unitary almost everywhere on the real line, the Potapov-Ginzburg transform $\mbox{PG}(W)(z)$ is unitary on the real line. Now, we will apply Smirnov maximum principle for matrix valued functions to show that $\mbox{PG}(W)(z)\in \mathcal{S}^{m\times m}_{in}$. Here, we only need to show that $\mbox{PG}(W)(z)$ belongs to the Smirnov class. It is sufficient to show that the four blocks of $\mbox{PG}(W)(z)$ belong to the Smirnov class $\mathcal{N}_+^{n\times n}$. We consider $$c(z)=[\Phi(z)-I_n]~[\Phi(z)+I_n]^{-1}.$$
Since $\mbox{Re}~\Phi(z)\succeq 0$ for all $z\in\mathbb{C}_+$, we have $||c(z)||\leq 1$ for all $z\in\mathbb{C}_+$. Now, the $(1,2)$ block of $\mbox{PG}(W)(z)$ is of the form
$$w_{12}(z)w_{22}(z)^{-1}=c(z)$$
belongs to $\mathcal{N}_+^{n\times n}$ as $||c(z)||\leq 1$ for all $z\in\mathbb{C}_+$. The $(2,2)$ block of $\mbox{PG}(W)(z)$ is of the form
\begin{align*}
w_{22}(z)^{-1}&=2E_+(z)^{-1}S(z)[I_n+\Phi(z)]^{-1}\\
&=E_+(z)^{-1}S(z)[I_n-c(z)]
\end{align*}
belongs to $\mathcal{N}_+^{n\times n}$ as $E_+(z)^{-1}S(z)\in \mathcal{N}_+^{n\times n}$ and $I_n-c(z)$ is bounded. The $(1,1)$ block of $\mbox{PG}(W)(z)$ is of the form
\begin{align*}
&w_{11}(z)-w_{12}(z)w_{22}(z)^{-1}w_{21}(z)\\
=&\frac{1}{2}\left\{[I_n+\Phi^{\#}(z)-[I_n-\Phi(z)][I_n+\Phi(z)]^{-1}[I_n-\Phi^{\#}(z)]\right\}S(z)^{-1}E_-(z)\\
=&\left\{\left[\frac{\Phi(z)+\Phi^{\#}(z)}{2}\right]+[I_n-\Phi(z)][I_n+\Phi(z)]^{-1}\left[\frac{\Phi(z)+\Phi^{\#}(z)}{2}\right]\right\}S(z)^{-1}E_-(z)\\
=&\left\{I_n+[I_n-\Phi(z)][I_n+\Phi(z)]^{-1}\right\}S^{\#}(z)\{E_-^{\#}(z)\}^{-1}E_+^{-1}(z)E_-(z)\\
=&[I_n-c(z)]S^{\#}(z)\{E_-^{\#}(z)\}^{-1}E_+^{-1}(z)E_-(z)
\end{align*}
belongs to $\mathcal{N}_+^{n\times n}$ as $S^{\#}(z)\{E_-^{\#}(z)\}^{-1}\in\mathcal{N}_+^{n\times n}$ and $E_+^{-1}(z)E_-(z)\in \mathcal{S}_{in}^{n\times n}$. The $(2,1)$ block of $\mbox{PG}(W)(z)$ is of the form
\begin{align*}
&-w_{22}(z)^{-1}w_{21}(z)\\
=&E_+^{-1}(z)S(z)[I_n+\Phi(z)]^{-1}[I_n-\Phi^{\#}(z)]S(z)^{-1}E_-(z)\\
=&E_+^{-1}(z)S(z)[I_n+\Phi(z)]^{-1}S(z)^{-1}E_-(z)\\
&-E_+^{-1}(z)S(z)[I_n+\Phi(z)]^{-1}\Phi^{\#}(z)S(z)^{-1}E_-(z)\\
=&E_+^{-1}(z)E_-(z)-E_+^{-1}(z)S(z)[I_n+\Phi(z)]^{-1}[\Phi(z)+\Phi^{\#}(z)]S(z)^{-1}E_-(z)\\
=&E_+^{-1}(z)E_-(z)-E_+^{-1}(z)S(z)[I_n-c(z)]S^{\#}(z)\{E_-^{\#}(z)\}^{-1}E_+^{-1}(z)E_-(z).
\end{align*}
belongs to $\mathcal{N}_+^{n\times n}$ as $E_+^{-1}(z)E_-(z)\in \mathcal{S}_{in}^{n\times n}$, $I_n-c(z)$ is bounded and $E_+^{-1}(z)S(z)$, $S^{\#}(z)\{E_-^{\#}(z)\}^{-1}$ belong to $\mathcal{N}_+^{n\times n}$. This completes the proof.
\end{proof}
Note that the de Branges matrix that we constructed in the previous theorem from given $S(z)$, $E_+(z)$, and $E_-(z)$ is unique subject to the given parameters $(P,Q)$. Now, suppose two de Branges matrices are 
$$ \mathcal{A}(z)=\begin{bmatrix}
    S(z)^{-1} & 0 \\
    0  & S(z)^{-1} \\
\end{bmatrix}
\begin{bmatrix}
    \tilde{a}_{11}(z) & \tilde{a}_{12}(z) \\
    \tilde{a}_{21}(z)  & \tilde{a}_{22}(z) \\
\end{bmatrix}$$
and 
$$\mathcal{B}(z)=\begin{bmatrix}
    S(z)^{-1} & 0 \\
    0  & S(z)^{-1} \\
\end{bmatrix}
\begin{bmatrix}
    \tilde{a}_{11}(z) & \tilde{a}_{12}(z) \\
    \tilde{b}_{21}(z)  & \tilde{b}_{22}(z) \\
\end{bmatrix}$$
corresponding to the parameters $(P,Q)$ and $(\tilde{P},\tilde{Q})$ respectively. Due to $(\ref{Lower left})$, we have
$$\tilde{b}_{21}(z)-\tilde{a}_{21}(z)=S(z)[(P-\tilde{P})+z(\tilde{Q}-Q)]S(z)^{-1}\tilde{a}_{11}(z).$$
Similarly, due to $(\ref{Lower right})$, we have
$$\tilde{b}_{22}(z)-\tilde{a}_{22}(z)=S(z)[(P-\tilde{P})+z(\tilde{Q}-Q)]S(z)^{-1}\tilde{a}_{12}(z).$$
Thus, the following identity holds
$$\mathcal{B}(z)=\begin{bmatrix}
    I_n & 0 \\
    (P-\tilde{P})+z(\tilde{Q}-Q)  & I_n \\
\end{bmatrix}
\mathcal{A}(z).$$


\vspace{.2in}
\noindent \textbf{Acknowledgements:} 
The research of the first author is supported by the University Grants Commission (UGC) fellowship (Ref. No. DEC18-424729), Govt. of India.
The research of the second author is supported by the MATRICS grant of SERB (MTR/2023/001324).

\vspace{.4cm}

\noindent\textbf{Conflict of interest:}\\
The authors declare that they have no conflict of interest.

\vspace{.4cm}

\noindent\textbf{Data availability:}\\
No data was used for the research described in the article.
 
\vspace{.4in}


\begin{thebibliography}{25}	
	\bibitem{ArD01} D. Arov, H. Dym, \emph{Matricial Nehari problems, J-inner matrix functions and the Muckenhoupt condition,} J. Funct. Anal., 181 (2001), pp. 227-299.
	\bibitem{ArD08} D. Arov, H.  Dym, \emph{$J-$Contractive Matrix Valued Functions and Related Topics. } Cambridge University Press, Cambridge, England, 2008.		
	\bibitem{ArD18} D. Arov, H. Dym, \emph{Multivariate prediction, de Branges spaces, and related extension and inverse problems,} Birkh$\ddot{a}$user, Basel (2018).
	\bibitem{Bart1} H. Bart, I. Gohberg, M. A. Kaashoek, \emph{Minimal Factorization of Matrix and Operator Functions,} Oper. Theory: Adv. Appl. 1, Birkh$\ddot{a}$user Verlag, Basel (1979).
	
	
	
	
\bibitem {Branges 4} L. de Branges, \emph{Hilbert spaces of entire functions,} Prentice-Hall, Inc., Englewood Cliffs, N. J., (1968).
\bibitem{Branges 5} L. de Branges, \emph{The expansion theorem for Hilbert spaces of entire functions,} in : Entire functions and related parts of analysis, in: Proc. Sympos. Pure Math., American Mathematical Society, Providence, Rhode Island, (1968), 79-148.
\bibitem{Rovnyak} L. de Branges, J. Rovnyak, \emph{Canonical models in quantum scattering theory,} in: Perturbation theory and its applications in quantum mechanics, C. Wilcox editor, Wiley, New York (1966).	
\bibitem{Brodskii} M. S. Brodskiĭ, \emph{Triangular and Jordan representations of linear operators,} Translations of mathematical monographs, Vol. 32, American Mathematical Society, Providence, RI, 1971.
\bibitem{Livshits} M. S. Brodskiĭ, M. S. Livshits, \emph{The spectral analysis of nonselfadjoint operators and intermediate systems,} Uspekhi Mat. Nauk 13:1 (1958), 3-85 (Russian).
\bibitem{JFA} H. Dym,   S.Sarkar, \emph{Multiplication operators with deficiency indices $(p,p)$ and sampling formulas in reproducing kernel Hilbert spaces of entire vector valued functions,} J. Funct. Anal. {273(2017),3671-3718.}
\bibitem{Gant} F. R. Gantmacher, \emph{The Theory of Matrices,} Chelsea, New York (1959).
	\bibitem{Golinskii} L. Golinskii, I. Mikhailova, \emph{Hilbert spaces of entire functions as a J theory subject,} Topics in interpolation theory (Leipzig 1994), Oper. Theory Adv. Appl., vol. 95, Birkh$\ddot{a}$user, Basel (1997), pp. 205-251.
\bibitem{Gubreev} G. M. Gubreev, A. A. Tarasenko, \emph{Spectral decomposition of model operators in de Branges spaces,} Sbornik Math. 201, 1599-1634 (2010). 
		\bibitem{Mahapatra} S. Mahapatra, S. Sarkar, \emph{Vector valued de Branges spaces of entire functions based on pairs of Fredholm operator valued functions and functional model,} J. Math. Anal. Appl. 533 (2024) 128010.
	\bibitem{Ran} C. L. Prather, A. C. M. Ran, \emph{A Hadamard factorization theorem for entire matrix valued functions,} Operator Theory: Adv. Appl. 19 (1986), 359-372.
	
\end{thebibliography}
\end{document}